 \newtheorem{remark}{Remark}
 \newtheorem{lemma}[remark]{Lemma}
 \newtheorem{theorem}[remark]{Theorem}
 \newtheorem{proposition}[remark]{Proposition}
 \newtheorem{corollary}[remark]{Corollary}
\title{On the strong metric dimension of Cartesian sum graphs}
\author{Dorota Kuziak$^{(1)}$, Ismael G. Yero$^{(2)}$ and Juan A. Rodr\'{\i}guez-Vel\'{a}zquez$^{(1)}$\\
$^{(1)}${\small Departament d'Enginyeria Inform\`atica i Matem\`atiques,}\\
{\small Universitat Rovira i Virgili,} {\small Av. Pa\"{\i}sos Catalans 26, 43007 Tarragona, Spain.}\\
{\small dorota.kuziak\@@urv.cat, juanalberto.rodriguez\@@urv.cat}\\
$^{(2)}${\small Departamento de Matem\'aticas, Escuela Polit\'ecnica Superior de Algeciras}\\
{\small Universidad de C\'adiz,} {\small Av. Ram\'on Puyol s/n, 11202 Algeciras, Spain.}\\
{\small ismael.gonzalez\@@uca.es}\\
}
\begin{document}
\maketitle

\begin{abstract}
A vertex $w$ of a connected graph $G$ strongly resolves two vertices $u,v\in V(G)$, if there exists some shortest $u-w$ path containing $v$ or some shortest $v-w$ path containing $u$. A set $S$ of vertices is a strong metric generator for $G$ if every pair of vertices of $G$ is strongly resolved by some vertex of $S$. The smallest cardinality of a strong metric generator for $G$ is called the strong metric dimension of $G$. In this paper we obtain several tight bounds or closed formulae for the strong metric dimension of the Cartesian sum of graphs in terms of the strong metric dimension, clique number or twins-free clique number of its factor graphs.
\end{abstract}

{\it Keywords:} Strong metric dimension; strong metric basis; strong metric generator;  Cartesian sum graphs.

{\it AMS Subject Classification Numbers:}  05C12; 05C69; 05C76.

\section{Introduction}

Nowadays several applications of locating sets for graphs can be found in literature. For instance, applications to long range aids to navigation \cite{Slater1975}; to chemistry for representing chemical compounds \cite{Johnson1993, Johnson1998}; to problems of pattern recognition and image processing  \cite{Melter1984}; or to navigation of robots in networks \cite{Khuller1996}. Nevertheless, the first problem which motivated the definition of locating sets was the problem of uniquely recognizing the position of an intruder in a network, described by Slater in \cite{Slater1975}. Also, an equivalent concept was  introduced independently by Harary and Melter in \cite{Harary1976}, where the locating sets were called resolving sets. Moreover, in accordance with the well-known terminology of metric spaces, in \cite{Sebo2004} locating sets (resolving sets) were renamed as metric generators. In this work we follow the terminology of metric generator. After those primary articles, several variants of metric generators have been appearing in the graph theory researches. In this sense, according to the amount of literature concerning this topic and all its close variants, we restrict our references to those ones that we really refer to in a non-superficial way.

One of the variants of metric generator was presented by Seb\H{o} and Tannier in \cite{Sebo2004}, and studied further in several articles. Given a connected graph $G=(V(G),E(G))$ and two vertices $x,y\in V(G)$, the distance between $x$ and $y$ is the length of a shortest $x-y$ path in $G$ and is denoted by $d_G(x,y)$. A vertex $w\in V(G)$ \emph{strongly resolves} two vertices $u,v\in V(G)$ if $d_G(w,u)=d_G(w,v)+d_G(v,u)$ or $d_G(w,v)=d_G(w,u)+d_G(u,v)$, \emph{i.e.}, there exists some shortest $w-u$ path containing $v$ or some shortest $w-v$ path containing $u$. A set $S$ of vertices of $G$ is a \emph{strong metric generator} for $G$ if every two vertices of $G$ are strongly resolved by some vertex of $S$. The smallest cardinality of a strong metric generator of $G$ is called the \emph{strong metric dimension} and is denoted by $\dim_s(G)$. A \emph{strong metric basis} of $G$ is a strong metric generator for $G$ of cardinality $\dim_s(G)$.

Graph products have been frequently investigated in the last few years and the theory of recognizing the structure of classes of these
graphs is very common. For more information in this topic we suggest the book \cite{Hammack2011}. Other standard approach to graph products is to deduce properties of the product with respect to its factors. The case of strong metric dimension has not escaped to these kind of studies. For example, the strong metric dimension of product graphs has been studied for   Cartesian product graphs and direct product graphs \cite{Rodriguez-Velazquez2013a}, for strong product graphs \cite{Kuziak2013c,Kuziak-Erratum}, for corona product graphs and join graphs \cite{Kuziak2013}, for rooted product graphs \cite{Kuziak2013b} and for lexicographic product graphs \cite{Kuziak2014}. In this paper we study the strong metric dimension of Cartesian sum graphs.

Now we give some basic concepts and notations. Let $G=(V,E)$ be a simple graph. For two adjacent vertices $u$ and $v$ of $G$ we use the notation  $u\sim v$ and, in this case, we say that $uv$ is an edge of $G$, \emph{i.e.}, $uv\in E$. The complement $G^c$ of $G$ has the same vertex set than $G$ and $uv\in E(G^c)$ if and only if $uv\notin E$. The diameter of $G$ is defined as $$D(G)=\max_{u,v\in V}\{d_G(u,v)\}.$$
If $G$ is not connected, then we assume that the distance between any two vertices belonging to different components of $G$ is infinity and, thus, its diameter is $D(G)=\infty$. For a vertex $v\in V,$ the set $N_G(v)=\{u\in V:\; u\sim v\}$ is the open neighborhood of $v$ and the set $N_G[v] = N_G(v)\cup \{v\}$ is the closed neighborhood of $v$. We recall that the \emph{clique number} of a graph $G$, denoted by $\omega(G)$, is the number of vertices in a maximum clique in $G$. We refer to an $\omega(G)$-set in a graph $G$ as a clique of cardinality $\omega(G)$. Two vertices $x$, $y$ are called \emph{true twins} if $N_G[x] = N_G[y]$. We say that $X\subset V(G)$ is a \emph{twins-free clique} in $G$ if $X$ is a clique and for every $u,v\in X$ it follows $N_G[u]\ne N_G[v]$, \emph{i.e.}, $X$ is a clique and it contains no true twins. We say that the \emph{twins-free clique number} of $G$, denoted by $\varpi(G)$, is the maximum cardinality among all twins-free cliques in $G$. Thus, $\omega(G)\ge \varpi(G)$. We refer to an $\varpi(G)$-set in a graph $G$ as a twins-free clique of cardinality $\varpi(G)$.

A set $S$ of vertices of $G$ is a \emph{vertex cover} of $G$ if every edge of $G$ is incident with at least one vertex of $S$. The \emph{vertex cover number} of $G$, denoted by $\beta(G)$, is the smallest cardinality of a vertex cover of $G$. We refer to an $\beta(G)$-set in a graph $G$ as a vertex cover set of cardinality $\beta(G)$.

Recall that the largest cardinality of a set of vertices of $G$, no two of which are adjacent, is called the \emph{independence number} of $G$ and is denoted by $\alpha(G)$. We refer to a $\alpha(G)$-set in a graph $G$ as an independent set of cardinality $\alpha(G)$. The following well-known result, due to Gallai, states the relationship between the independence number and the vertex cover number of a graph.

\begin{theorem}{\rm (Gallai's theorem)}\label{th gallai}
For any graph  $G$ of order $n$,
$$\beta(G)+\alpha(G) = n.$$
\end{theorem}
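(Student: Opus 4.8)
The plan is to reduce the identity to the elementary but fundamental duality between vertex covers and independent sets: a set $S\subseteq V(G)$ is a vertex cover of $G$ if and only if its complement $V(G)\setminus S$ is an independent set. First I would establish this equivalence directly from the definitions. If $S$ is a vertex cover, then every edge is incident with some vertex of $S$, so no edge can have both endpoints in $V(G)\setminus S$, whence $V(G)\setminus S$ is independent; conversely, if $V(G)\setminus S$ is independent then no edge lies entirely inside it, so every edge meets $S$, i.e., $S$ is a vertex cover. This single observation carries essentially all of the content of the theorem.

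With the equivalence in hand, I would prove the two inequalities separately. For $\beta(G)+\alpha(G)\le n$, take a maximum independent set $I$ with $|I|=\alpha(G)$; by the equivalence its complement $V(G)\setminus I$ is a vertex cover of cardinality $n-\alpha(G)$, and since $\beta(G)$ is the minimum size of a vertex cover we obtain $\beta(G)\le n-\alpha(G)$. For the reverse inequality $\beta(G)+\alpha(G)\ge n$, take a minimum vertex cover $S$ with $|S|=\beta(G)$; its complement $V(G)\setminus S$ is an independent set of size $n-\beta(G)$, and since $\alpha(G)$ is the maximum size of an independent set we obtain $\alpha(G)\ge n-\beta(G)$. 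Combining the two bounds gives $\beta(G)+\alpha(G)=n$.

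The only point requiring care is keeping track of the directions of the two optimizations — one minimizes over vertex covers while the other maximizes over independent sets — so that the complement of an \emph{optimal} object in one family is used merely as a \emph{feasible} object in the bound for the other family. I do not expect any genuine obstacle: the statement is purely combinatorial once the cover/independence duality is noted, and the argument uses neither distances nor connectivity, so it holds for arbitrary graphs of order $n$.
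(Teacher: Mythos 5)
Your proof is correct and complete: the cover/independence complementation equivalence is exactly the right tool, and the two feasibility arguments (complement of a maximum independent set is a feasible cover, complement of a minimum cover is a feasible independent set) combine to give the equality. The paper itself states Gallai's theorem without proof, citing it as a classical fact, so there is no in-paper argument to compare against; yours is the standard textbook derivation and is valid for arbitrary graphs, as you note.
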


A vertex $u$ of $G$ is \emph{maximally distant} from $v$ if for every $w\in N_G(u)$, $d_G(v,w)\le d_G(u,v)$. If $u$ is maximally distant from $v$ and $v$ is maximally distant from $u$, then we say that $u$ and $v$ are \emph{mutually maximally distant}. The {\em boundary} of $G=(V,E)$ is defined as $\partial(G) = \{u\in V:\; \mbox{exists } v\in V\, \mbox{such that } u,v\mbox{ are mutually maximally distant}\}$.
We use the notion of strong resolving graph introduced by Oellermann and Peters-Fransen in \cite{Oellermann2007}. The \emph{strong resolving graph}\footnote{In fact, according to \cite{Oellermann2007} the strong resolving graph $G'_{SR}$ of a graph $G$ has vertex set $V(G'_{SR})=V(G)$ and two vertices $u,v$ are adjacent in $G'_{SR}$ if and only if $u$ and $v$ are mutually maximally distant in $G$. So, the strong resolving graph defined here is a subgraph of the strong resolving graph defined in \cite{Oellermann2007} and can be obtained from the latter graph by deleting its isolated vertices.} of $G$ is a graph $G_{SR}$  with vertex set $V(G_{SR}) = \partial(G)$ where two vertices $u,v$ are adjacent in $G_{SR}$ if and only if $u$ and $v$ are mutually maximally distant in $G$.

If it is the case, for a non-connected graph $G$ we use the assumption that any two vertices belonging to different components of $G$ are mutually maximally distant between them.

It was shown in  \cite{Oellermann2007}  that the problem of finding the strong metric dimension of a graph $G$ can be transformed into the problem of computing the vertex cover number of $G_{SR}$.

\begin{theorem}{\em \cite{Oellermann2007}}\label{th oellermann}
For any connected graph $G$,
$$\dim_s(G) = \beta(G_{SR}).$$
\end{theorem}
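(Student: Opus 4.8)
The plan is to prove the two inequalities $\dim_s(G)\ge\beta(G_{SR})$ and $\dim_s(G)\le\beta(G_{SR})$ separately, each resting on one structural fact about mutually maximally distant (MMD) vertices. The first thing I would establish, for the lower bound, is that an MMD pair can only be strongly resolved ``from inside'': if $u,v$ are MMD, then the only vertices of $G$ that strongly resolve $u,v$ are $u$ and $v$ themselves. Indeed, suppose $w\notin\{u,v\}$ strongly resolves them, say $d_G(w,u)=d_G(w,v)+d_G(v,u)$, so that $v$ lies on a shortest $w$-$u$ path. Since $w\ne v$, there is a neighbor $v'$ of $v$ with $d_G(w,v')=d_G(w,v)-1$, and then $d_G(v',u)\ge d_G(w,u)-d_G(w,v')=d_G(v,u)+1$, contradicting that $v$ is maximally distant from $u$; the other case is symmetric. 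Consequently every strong metric generator $S$ must meet $\{u,v\}$ for each edge $uv$ of $G_{SR}$ (whose edges are exactly the MMD pairs of boundary vertices), so $S\cap V(G_{SR})$ is a vertex cover of $G_{SR}$ and $\dim_s(G)=|S|\ge\beta(G_{SR})$.

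For the upper bound I would show that \emph{any} vertex cover $S$ of $G_{SR}$ is already a strong metric generator of $G$. The engine is a lemma proved by a two-stage ``pushing to the boundary'' argument: for every pair $u,v\in V(G)$ there exist MMD vertices $w,z$ such that both $u$ and $v$ lie on a common shortest $w$-$z$ path. To build them, first choose $w$ maximizing $d_G(w,v)$ among all vertices $a$ with $u$ on a shortest $a$-$v$ path (the vertex $u$ itself qualifies, so this set is nonempty); a short neighbor-comparison argument shows this maximality forces $w$ to be maximally distant from $v$. Then choose $z$ maximizing $d_G(z,w)$ among all vertices $b$ with $v$ on a shortest $b$-$w$ path; the identical argument makes $z$ maximally distant from $w$. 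Concatenating a shortest $z$-$v$ path, a shortest $v$-$u$ path and a shortest $u$-$w$ path produces a path of length $d_G(z,v)+d_G(v,u)+d_G(u,w)=d_G(z,w)$, hence a shortest $z$-$w$ path passing through both $v$ and $u$.

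The main obstacle, and the step I would treat most carefully, is upgrading this to genuine \emph{mutual} maximal distance, i.e.\ verifying that $w$ is maximally distant from $z$ and not merely from $v$. This comes from the triangle inequality: for any neighbor $w'$ of $w$ one has $d_G(w',z)\le d_G(w',v)+d_G(v,z)\le d_G(w,v)+d_G(v,z)=d_G(w,z)$, where the middle inequality uses that $w$ is maximally distant from $v$ and the final equality uses that $v$ lies on the $w$-$z$ geodesic. Thus $w,z$ are MMD and $wz\in E(G_{SR})$. Because $u$ and $v$ lie on a shortest $w$-$z$ path, each endpoint strongly resolves the pair $u,v$ (along the geodesic one endpoint sees $u$ before $v$, the other sees $v$ before $u$). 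Since the vertex cover $S$ contains at least one of $w,z$, the pair $u,v$ is strongly resolved by $S$; as $u,v$ were arbitrary, $S$ is a strong metric generator and $\dim_s(G)\le\beta(G_{SR})$. Combining the two inequalities yields $\dim_s(G)=\beta(G_{SR})$.
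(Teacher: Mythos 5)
Your proof is correct. The paper itself gives no proof of this statement --- it is quoted verbatim from Oellermann and Peters-Fransen --- and your argument is a faithful and complete reconstruction of the standard proof from that source: the lower bound via the observation that a mutually maximally distant pair can only be strongly resolved by its own endpoints (so any strong metric generator covers every edge of $G_{SR}$), and the upper bound via the ``pushing to the boundary'' lemma producing, for each pair $u,v$, a mutually maximally distant pair $w,z$ with $u$ and $v$ on a common $w$--$z$ geodesic; in particular your careful verification that $w$ is maximally distant from $z$ (not merely from $v$), using $d_G(w',z)\le d_G(w',v)+d_G(v,z)\le d_G(w,v)+d_G(v,z)=d_G(w,z)$, is exactly the point where such arguments usually go wrong, and you handle it correctly.
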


We use the notation $K_n$,  $C_n$, $N_n$ and $P_n$ for complete graphs,  cycle graphs, empty graphs and path graphs, respectively. Moreover, any graph with at least two vertices is a non-trivial graph, while an empty graph is an edgeless non-trivial graph. In this work, the remaining definitions are given the first time that the concept appears in the text.

\section{Cartesian sum graphs}

The \textit{Cartesian sum} of two graphs $G=(V_1,E_1)$ and $H=(V_2,E_2)$, denoted by $G\oplus H$, is the graph with vertex set $V=V_1\times V_2$, where $(a,b)(c,d)\in E(G\oplus H)$ if and only if $ac\in E_1$ or $bd\in E_2$.

This notion of graph product was introduced by Ore \cite{Ore1962} in 1962, nevertheless it has passed almost unnoticed and just few results (for instance \cite{Cizek1994,Scheinerman1997}) have been presented about this. According to that we consider it deserves to begin the study of some of its properties. The Cartesian sum is also known as the \emph{disjunctive product} \cite{Scheinerman1997} and it is a commutative operation \cite{Hammack2011}. This well known fact is very useful in this section. Moreover, in several results, symmetric cases are omitted without specific mentioning of that fact.

The \textit{lexicographic product} of two graphs $G=(V_1,E_1)$ and $H=(V_2,E_2)$ is the graph $G\circ H$ with vertex set $V=V_1\times V_2$ and two vertices $(a,b),(c,d)\in V$  are adjacent in $G\circ H$ if and only if either $ac\in E_1$, or $a=c$ and $bd\in E_2$.

Note that the lexicographic product of two graphs is not a commutative operation. Moreover, $G\circ H$ is a connected graph if and only if $G$ is connected. We would point out the following fact.

\begin{remark}\label{rem cart-lex}
For any graph  $G$ and any non-negative integer $n$,  $$G\oplus N_n= G\circ N_n.$$
\end{remark}

The strong metric dimension of $G\circ N_n$ was studied in \cite{Kuziak2014}. In order to present some results on  $\dim_s(G\circ N_n)$, we need to introduce some additional notation and terminology.
We define the {\em TF-boundary} of a  non-complete graph $G=(V,E)$ as a set $\partial_{TF}(G) \subseteq \partial(G) $ where $x\in \partial_{TF}(G)$ whenever there exists $y\in \partial (G)$ such that $x$ and $y$ are mutually maximally distant in $G$ and $N_G[x]\ne N_G[y]$ (which means that $x,y$ are not true twins).
The \emph{strong resolving TF-graph} of $G$ is a graph $G_{SRS}$  with vertex set $V(G_{SRS}) = \partial_{TF}(G)$ where two vertices $u,v$ are adjacent in $G_{SRS}$ if and only if $u$ and $v$ are mutually maximally distant in $G$ and $N_G[x]\ne N_G[y]$. Notice that the strong resolving TF-graph is a subgraph of the strong resolving graph.

\begin{proposition}{\rm \cite{Kuziak2014}}\label{H=empty}
Let $G$ be a connected non-complete  graph of order $n\ge 2$ and let  $n'\ge 2$ be an integer.
Then
$$\dim_s(G\circ N_{n'})=n(n'-1)+\beta(G_{SRS}).$$
In particular, if $G$ has no true twin vertices, then $$\dim_s(G\circ N_{n'})=n(n'-1)+\dim_s(G).$$
Moreover,
$$\dim_s(K_n\circ N_{n'})=n(n'-1).$$
\end{proposition}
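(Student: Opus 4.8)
The plan is to invoke Theorem~\ref{th oellermann}, which reduces the computation of $\dim_s(G\circ N_{n'})$ to that of $\beta\big((G\circ N_{n'})_{SR}\big)$, and then to identify the strong resolving graph $R:=(G\circ N_{n'})_{SR}$ explicitly in terms of $G_{SRS}$. I write the vertices of $G\circ N_{n'}$ as pairs $(a,b)$ with $a\in V(G)$ and $b\in V(N_{n'})$, and I call $F_a=\{(a,b):\ b\in V(N_{n'})\}$ the \emph{fiber} over $a$. Since $G$ is connected and non-trivial, a routine distance computation gives $d_{G\circ N_{n'}}\big((a,b),(c,d)\big)=d_G(a,c)$ whenever $a\ne c$, and $d_{G\circ N_{n'}}\big((a,b),(a,d)\big)=2$ whenever $b\ne d$ (leave the fiber to a neighbour of $a$ and return). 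In particular every vertex is mutually maximally distant from any other vertex of its own fiber, so $\partial(G\circ N_{n'})=V(G\circ N_{n'})$ and $R$ has all $nn'$ vertices.

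The heart of the argument is to describe the edges of $R$. First, each fiber $F_a$ induces a clique $K_{n'}$ in $R$. Second, for $a\ne c$ I would show that $(a,b)$ and $(c,d)$ are mutually maximally distant in $G\circ N_{n'}$ if and only if $a$ and $c$ are mutually maximally distant and non-adjacent in $G$; the ``the $b,d$ components do not matter'' phenomenon holds because, when $a\not\sim c$, every neighbour of $(a,b)$ lies in a fiber $F_{a'}$ with $a'\in N_G(a)$ and $a'\ne c$, so its distance to $(c,d)$ equals $d_G(c,a')$, which reduces the maximal-distance condition exactly to the one in $G$; conversely, if $a\sim c$ one finds a neighbour of $(a,b)$ in $F_c$ at distance $2>1$ from $(c,d)$, killing the property. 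I would then record the elementary fact that two \emph{adjacent} mutually maximally distant vertices $u,v$ of $G$ are necessarily true twins (MMD forces $N_G(u)\subseteq N_G[v]$ and $N_G(v)\subseteq N_G[u]$, whence $N_G[u]=N_G[v]$), so that non-adjacent MMD pairs are precisely the pairs $u,v$ that are MMD with $N_G[u]\ne N_G[v]$, i.e.\ precisely the edges of $G_{SRS}$. Combining the two points, $F_a$ and $F_c$ are joined completely in $R$ exactly when $ac\in E(G_{SRS})$, while $F_a$ carries no edges to other fibers when $a\notin\partial_{TF}(G)$.

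With $R$ described, I would finish by a double application of Gallai's theorem (Theorem~\ref{th gallai}). Applying it to $R$ gives $\beta(R)=nn'-\alpha(R)$. Since each fiber is a clique, an independent set of $R$ meets every fiber in at most one vertex and the fibers it meets must be pairwise non-adjacent in $G_{SRS}$; hence a maximum independent set takes one vertex from each of the $n-|\partial_{TF}(G)|$ fibers over $V(G)\setminus\partial_{TF}(G)$ together with an $\alpha(G_{SRS})$-set of fibers, so $\alpha(R)=n-|\partial_{TF}(G)|+\alpha(G_{SRS})$. Applying Gallai's theorem to $G_{SRS}$ (of order $|\partial_{TF}(G)|$) gives $\alpha(G_{SRS})=|\partial_{TF}(G)|-\beta(G_{SRS})$, whence $\alpha(R)=n-\beta(G_{SRS})$ and therefore $\beta(R)=nn'-n+\beta(G_{SRS})=n(n'-1)+\beta(G_{SRS})$, the claimed formula.

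For the two remaining assertions: if $G$ has no true twins, then by the fact above no MMD pair of $G$ is adjacent, so $G_{SR}=G_{SRS}$ and $\beta(G_{SRS})=\beta(G_{SR})=\dim_s(G)$ by Theorem~\ref{th oellermann}, giving $\dim_s(G\circ N_{n'})=n(n'-1)+\dim_s(G)$. For $K_n$ the same distance analysis shows that $K_n\circ N_{n'}$ is complete multipartite with parts $F_a$, so the only MMD pairs lie inside fibers; thus $(K_n\circ N_{n'})_{SR}$ is the disjoint union of $n$ cliques $K_{n'}$, whose vertex cover number is $n(n'-1)$, yielding $\dim_s(K_n\circ N_{n'})=n(n'-1)$. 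The main obstacle I anticipate is the across-fiber MMD characterization together with the bookkeeping of which vertices of $G$ survive into $\partial_{TF}(G)$; once the identity linking $\alpha(R)$, $\alpha(G_{SRS})$ and $\beta(G_{SRS})$ is set up correctly, the remainder is a clean combination of Gallai's theorem with Theorem~\ref{th oellermann}.
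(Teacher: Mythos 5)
The paper does not actually prove Proposition~\ref{H=empty}: it is imported verbatim from \cite{Kuziak2014} and used as a black box, so there is no in-paper argument to compare yours against. Judged on its own, your derivation is correct and complete in all essentials. The distance formulae ($d_{G\circ N_{n'}}((a,b),(c,d))=d_G(a,c)$ for $a\ne c$, and $2$ within a fiber, using $n'\ge 2$ and connectedness of $G$) are right; the observation that adjacent mutually maximally distant vertices are exactly true twins correctly converts the condition ``MMD and not true twins'' into ``MMD and non-adjacent,'' which is what makes the cross-fiber MMD characterization land precisely on the edge set of $G_{SRS}$; and the resulting description of $(G\circ N_{n'})_{SR}$ as $n$ fiber-cliques $K_{n'}$ with complete joins exactly along $E(G_{SRS})$ is accurate. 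The double application of Gallai's theorem, with the bookkeeping $\alpha\bigl((G\circ N_{n'})_{SR}\bigr)=n-|\partial_{TF}(G)|+\alpha(G_{SRS})=n-\beta(G_{SRS})$, then yields $n(n'-1)+\beta(G_{SRS})$ as claimed, and your treatments of the twin-free case (where $G_{SRS}=G_{SR}$ so Theorem~\ref{th oellermann} applies) and of $K_n\circ N_{n'}$ (disjoint union of $n$ cliques $K_{n'}$) are both sound. The one place worth writing out with full care in a final version is the cross-fiber equivalence, in particular that $a\not\sim c$ forces every neighbour of $(a,b)$ to lie over some $a'\in N_G(a)$ with $a'\ne c$, which is the step that makes the $b,d$ coordinates irrelevant; you have identified exactly this as the crux, and your sketch of it is correct.
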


The following remark is a direct consequence of the definition of Cartesian sum graph.
\begin{remark}\label{rem diam1}
A graph $G\oplus H$ is complete if and only if both, $G$ and $H$, are complete graphs.
\end{remark}

Because of the above we continue with the Cartesian sum of two  graphs $G$ and $H$, such that $G$ or $H$ is not complete.

\begin{proposition}\label{lem Cart sum diam}
Let $G$ and $H$ be two non-trivial graphs such that at least one of them is non-complete and let $n\ge 2$ be an integer. Then the following assertions hold.
\begin{enumerate}[{\rm (i)}]
\item  $D(G\oplus N_n)=\max\{2,D(G)\}.$
\item If $G$ and $H$ have isolated vertices, then  $D(G\oplus H)=\infty$.
\item If neither $G$ nor  $H$ has isolated vertices, then $D(G\oplus H)=2$.
\item If  $D(H)\le 2$, then $D(G\oplus H)=2$.
\item If $D(H)>2$, $H$ has no isolated vertices and $G$ is a non-empty graph having at least one isolated vertex, then $D(G\oplus H)=3$.
\end{enumerate}
\end{proposition}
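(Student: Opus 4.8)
The plan is to compute each diameter directly from the defining adjacency condition of the Cartesian sum, using the fundamental observation that two distinct vertices $(a,b)$ and $(c,d)$ are at distance $1$ in $G\oplus H$ precisely when $ac\in E_1$ or $bd\in E_2$. The whole proof rests on understanding when two non-adjacent vertices can be joined by a path of length $2$, i.e. when there exists a common neighbor. I would first record the key combinatorial fact: $(a,b)$ and $(c,d)$ have a common neighbor $(x,y)$ if and only if we can choose $x,y$ so that each of the two endpoints is adjacent to $(x,y)$. Since adjacency only requires one coordinate to be an edge, this is usually easy to arrange unless one of the graphs is very degenerate (has isolated vertices), which is exactly the dichotomy that drives the five cases.

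For part (i), I would use Remark~\ref{rem cart-lex} to replace $G\oplus N_n$ by $G\circ N_n$ and compute the diameter of the lexicographic product: two vertices $(a,b),(c,d)$ with $a\ne c$ inherit essentially the $G$-distance (at least when $d_G(a,c)\ge 2$), while vertices in the same $G$-fiber ($a=c$) are at distance $2$ via any $G$-neighbor of $a$. This yields $\max\{2,D(G)\}$. For part (iii), the claim is that with no isolated vertices every pair is at distance at most $2$: given any two distinct vertices I would exhibit a common neighbor by using the guaranteed edges incident to the relevant coordinates, so the diameter is exactly $2$ (it cannot be $1$ by Remark~\ref{rem diam1} since at least one factor is non-complete). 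Part (iv) is similar but leverages $D(H)\le 2$ directly: any two vertices are connected through the $H$-coordinate in at most two steps regardless of the $G$-coordinate, again forcing diameter $2$.

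For part (ii), if both $G$ and $H$ contain isolated vertices, say $g_0$ isolated in $G$ and $h_0$ isolated in $H$, then I would argue that a vertex of the form $(g_0,h_0)$ cannot reach certain other such vertices: neither coordinate can ever produce an edge out of an isolated-isolated pair in the required direction, so some pairs lie in different components and $D(G\oplus H)=\infty$. The content is to pin down a concrete pair of vertices with no connecting path. Part (v) is the most delicate: here $H$ has no isolated vertices but $D(H)>2$, while $G$ is non-empty with an isolated vertex. I would show the diameter is exactly $3$ by proving both an upper and a lower bound. For the upper bound, I would take any two vertices and build a path of length at most $3$, using an $H$-edge at an endpoint to escape any ``bad'' isolated $G$-coordinate and thereby reduce to the distance-$\le 2$ situation. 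For the lower bound, I would produce a specific pair realizing distance $3$: take $(g_0,b)$ and $(c,d)$ where $g_0$ is isolated in $G$ and $b,d$ are far apart in $H$, and verify no path of length $2$ exists because every intermediate vertex fails one of the two adjacency conditions.

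The main obstacle will be part (v), and specifically the lower bound showing distance $3$ is actually attained rather than $2$. This requires carefully choosing the witnessing pair so that the isolated $G$-coordinate blocks all length-$2$ routes through the $G$-direction while the condition $D(H)>2$ blocks the purely $H$-direction route; one must check that no mixed common neighbor exists, which is where the interplay between ``$G$ non-empty'' (so $G$ has at least one edge, used in the upper bound) and ``$g_0$ isolated'' must be handled simultaneously. The upper-bound direction in (v), while routine, also needs care to confirm that three steps always suffice no matter how badly the isolated vertices are positioned.
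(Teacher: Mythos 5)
Your overall strategy coincides with the paper's: compute distances directly from the adjacency rule of $G\oplus H$, with the case split driven by isolated vertices --- a common-neighbour argument for (iii) and (iv), the observation that pairing an isolated vertex of $G$ with one of $H$ produces an isolated vertex of $G\oplus H$ for (ii), the reduction of (i) to distances in $G$ (the paper computes these directly rather than via the lexicographic product, but that is the same computation), and an explicit upper/lower bound argument for (v). Parts (i)--(iv) are fine as sketched.

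The one step that would fail as written is the lower bound in (v). You propose the witnessing pair $(g_0,b)$ and $(c,d)$ with only $g_0$ required to be isolated in $G$ and $b,d$ far apart in $H$. If $c$ is not isolated in $G$, this pair is in fact at distance $2$: since $H$ has no isolated vertices, pick $w\in N_H(b)$ and $t\in N_G(c)$; then $(g_0,b)\sim(t,w)$ via the $H$-edge $bw$, and $(t,w)\sim(c,d)$ via the $G$-edge $tc$. So the ``mixed'' common neighbour you intend to exclude does exist unless \emph{both} first coordinates are isolated. The paper's choice is $u$ and $x$ both isolated in $G$ (possibly $u=x$) with $d_H(v,y)\ge 3$; then $N_{G\oplus H}(u,v)=V(G)\times N_H(v)$ and $N_{G\oplus H}(x,y)=V(G)\times N_H(y)$ are disjoint, giving distance exactly $3$. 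You clearly anticipated that the choice of pair is the delicate point, but the pair must be corrected in this way for the argument to close; with that fix, the remainder of your outline for (v) --- the length-$3$ path $(u,v)\sim(t,w)\sim(s,z)\sim(x,y)$ built from an edge $s\sim t$ of $G$ in the middle and $H$-edges at the two ends --- is exactly the paper's proof.
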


\begin{proof}Note that since $G$ and $H$ are two graphs such that at least one of them is non-complete, by Remark \ref{rem diam1}  we have that $D(G\oplus H)\ge 2$.
\begin{enumerate}[{\rm (i)}]

\item  If $G$ is connected, then $d_{G\oplus N_{n}}((a,b),(c,d)) = d_{G}(a,c)$ and  $d_{G\oplus N_{n}}((a,b),(a,d)) = 2$. Thus,
$D(G\oplus N_n)=\max\{2,D(G)\}.$

On the other hand, if $G_1$ and $G_2$ are two connected components of $G$, then for any $u\in V(G_1)$, $x\in V(G_2)$ and $v,y\in V(N_{n})$, we have that $(u,v)\not\sim (x,y)$, so $G\oplus N_{n}$ is not connected and, as a result,  $D(G\oplus N_n)=\infty$.
\item
If $u\in V(G)$ and $v\in V(H)$ are  isolated vertices, then $(u,v)\in V(G\oplus H)$ is an  isolated vertex, so  (ii) follows.

\item Assume that  neither $G$ nor  $H$ has isolated vertices. We consider the following cases for  two different vertices  $(u,v),(x,y)\in V(G\oplus H)$.
\\
\\
\noindent Case 1: $v=y$. Since $H$ has no isolated vertices, then there exists a vertex $w\in N_H(v)$. So,  $(u,v)\sim (u,w)\sim (x,y)$ and, as a consequence, $d_{G\oplus H}((u,v),(x,y))\le 2$.
\\
\\
\noindent Case 2: $u=x$. This case is symmetric to Case 1.
\\
\\
\noindent Case 3: $v\ne y$ and $u\ne x$. Since $G$ and $H$ have no isolated vertices,  there exist vertices $z\in N_G(x)$ and $w\in N_H(v)$. Hence,  $(u,v)\sim (z,w)\sim (x,y)$ and, as a result, $d_{G\oplus H}((u,v),(x,y))\le 2$.

According to the cases above the proof of (iii) is complete.

\item Let  $D(H)\le 2$. If $v$ and $y$ are two adjacent vertices of  $H$, then for any $u,x\in V(G)$ we have $d_{G\oplus H}((u,v),(x,y))=1$, while  if $v\not\sim y$ ($v$ and $y$ are not necessarily different), then for any $w\in N_H(v)\cap N_H(y)$ we have $(u,v)\sim (x,w)\sim (x,y)$. Thus, $d_{G\oplus H}((u,v),(x,y))\le 2$ and so (iv) follows.

\item Assume that $G$ has an isolated vertex, $H$ has no isolated vertices and $D(H)>2$. If $u$ and $x$ are not isolated vertices in $G$, then we proceed like in the proof of (iii) to show that $d_{G\oplus H}((u,v),(x,y))\le 2$. If $u$ or $x$ is an isolated vertex of $G$ and $d_H(v,y)\le 2$, then we proceed like in the proof of (iv). So, we consider that $u$ or $x$ is an isolated vertex and $d_H(v,y)\ge 3$.\\
\\
\noindent Case 1': $u$ is an isolated vertex and $x$ is not an isolated vertex. In this case there exists $t\in N_G(x)$ and,  since $H$ has no isolated vertices, there exists $w\in N_H(v)$. Hence, $(u,v)\sim (t,w)\sim (x,y)$ and, as a consequence,  $d_{G\oplus H}((u,v),(x,y))\le 2$.
\\
\\
\noindent Case 2':  $u$ and $x$ are isolated vertices ($u$ and $x$ are not necessarily different). Since $H$ has no isolated vertices and $d_H(v,y)\ge 3$, for every two vertices $w\in N_H(v)$ and $z\in N_H(y)$ it follows that $w\ne z$. Moreover, since $G$ is not  empty, there exist two different vertices $s,t\in V(G)$ such that $s\sim t$. Hence,  $(u,v)\sim (t,w)\sim (s,z)\sim (x,y)$. Thus, $d_{G\oplus H}((u,v),(x,y))\le 3$. On the other hand, since $N_{G\oplus H}(u,v)=V(G)\times N_H(v)$, $N_{G\oplus H}(x,y)=V(G)\times N_H(y)$ and $N_H(v)\cap N_H(y)=\emptyset$, we obtain that $N_{G\oplus H}(u,v)\cap N_{G\oplus H}(x,y)=\emptyset$.
Therefore, $d_{G\oplus H}((u,v),(x,y))= 3$ and the proof of (v) is complete.
\end{enumerate}
\end{proof}

\begin{corollary}\label{rem diam2}
The graph $G\oplus H$ is not connected if and only if both $G$ and $H$ have isolated vertices or $G$ is an empty graph and $H$ is not connected.
\end{corollary}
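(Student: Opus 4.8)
The plan is to read the stated condition symmetrically---so that, invoking commutativity of $\oplus$ together with the paper's convention that symmetric cases are omitted, the clause ``$G$ is empty and $H$ is not connected'' also stands for ``$H$ is empty and $G$ is not connected''---and then to reduce everything to the diameter computations of Proposition \ref{lem Cart sum diam}. Under the standing assumption that at least one of $G,H$ is non-complete, that proposition applies; if instead both were complete, then by Remark \ref{rem diam1} the graph $G\oplus H$ would be complete, hence connected, while none of the listed conditions could hold, so that case is vacuous. Throughout I use that $G\oplus H$ is connected precisely when $D(G\oplus H)<\infty$.

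For the sufficiency direction I would treat the two listed situations separately. If both $G$ and $H$ have isolated vertices, then part (ii) of Proposition \ref{lem Cart sum diam} gives $D(G\oplus H)=\infty$, so $G\oplus H$ is not connected. If $G$ is empty, write $G=N_m$ with $m\ge 2$; by commutativity $G\oplus H=H\oplus N_m$, and part (i) yields $D(H\oplus N_m)=\max\{2,D(H)\}$. When $H$ is not connected we have $D(H)=\infty$, whence $D(G\oplus H)=\infty$ and $G\oplus H$ is disconnected.

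For the necessity direction I would argue the contrapositive: assuming neither listed condition holds, I show $G\oplus H$ is connected. Since it is not the case that both factors have isolated vertices, by commutativity I may assume $H$ has no isolated vertex; note that then $H$ is non-empty. I split on $D(H)$. If $D(H)\le 2$, part (iv) gives $D(G\oplus H)=2$. If $D(H)>2$, I split on $G$: if $G$ also has no isolated vertex, part (iii) gives $D(G\oplus H)=2$; if $G$ has an isolated vertex but is non-empty, part (v) gives $D(G\oplus H)=3$; and if $G$ is empty, then $G\oplus H=H\oplus N_m$ and part (i) gives $D(G\oplus H)=\max\{2,D(H)\}$, which is finite because the failure of the second disjunct (with $G$ empty) forces $H$ to be connected. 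In every branch $D(G\oplus H)<\infty$, so $G\oplus H$ is connected.

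The main obstacle, and the point that must be handled with care, is the empty-factor branch: it is the only place where Proposition \ref{lem Cart sum diam}(i) and commutativity are both needed, and it is where the asymmetry of the stated condition becomes visible, since the reduction ``$H$ has no isolated vertex'' silently rules out $H$ being empty and thereby collapses the symmetric half of the second disjunct. Verifying that the five parts of the proposition exhaust all configurations of (absence of) isolated vertices, emptiness and diameter---with neither overlaps nor gaps---is the real content; once this partition is set up, each branch is an immediate citation.
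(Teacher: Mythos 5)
Your proof is correct and follows the route the paper intends: the corollary is stated without proof as an immediate consequence of Proposition~\ref{lem Cart sum diam}, and your exhaustive case analysis over parts (i)--(v), together with commutativity of $\oplus$, is exactly the implicit argument. Your observation that the second disjunct must be read symmetrically (``$H$ empty and $G$ not connected'' included) is the right reading, consistent with the paper's stated convention on omitting symmetric cases.
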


In order to present  the next result we need to introduce some more terminology. Given a graph  $G$, we  define $G^*$ as the graph with vertex set $V(G^*)=V(G)$ such that  two vertices $u,v$ are adjacent in $G^*$ if and only if either $d_G(u,v)\ge 2$ or  $u,v$ are true twins. If a graph $G$ has at least one isolated vertex, then we denote by %$G-K_1$ the graph obtained from $G$ by removing one of its isolated vertices and  by
$G_-$ the graph obtained from $G$ by removing all its isolated vertices. In this sense, $G^*_-$ is obtained from $G^*$ by removing all its isolated vertices.  Notice  that if $G$ has no true twins, then $G^*\cong G^c$.

\begin{proposition}\label{SRGraphDianCartSumle2}
Let $G$ and $H$ be two non-trivial graphs such that at least one of them is non-complete. If   $D(G)\le 2$
or neither $G$ nor  $H$ has isolated vertices, then
$$(G\oplus H)_{SR}\cong (G\oplus H)^*_-.$$
\end{proposition}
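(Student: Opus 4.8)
The plan is to reduce everything to the diameter-$2$ case and then match the mutually maximally distant (MMD) relation in $G\oplus H$ with the adjacency relation of the graph $(G\oplus H)^*$. Throughout I write $\Gamma:=G\oplus H$.

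First I would establish that under either hypothesis $\Gamma$ is connected with $D(\Gamma)=2$. If neither $G$ nor $H$ has an isolated vertex this is exactly Proposition \ref{lem Cart sum diam}(iii); if instead $D(G)\le 2$, then by the commutativity of $\oplus$ and Proposition \ref{lem Cart sum diam}(iv) applied with the roles of $G$ and $H$ interchanged (so that the factor of bounded diameter plays the role of $H$), we again obtain $D(\Gamma)=2$. Combined with Remark \ref{rem diam1}, which yields $D(\Gamma)\ge 2$, the diameter equals exactly $2$ in both cases.

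The heart of the argument is a clean characterization of the MMD relation in an \emph{arbitrary} graph $\Gamma$ with $D(\Gamma)=2$: two distinct vertices $u,v$ are MMD in $\Gamma$ if and only if either $d_\Gamma(u,v)=2$ or $u,v$ are true twins. Indeed, if $d_\Gamma(u,v)=2$ then every neighbor $w$ satisfies $d_\Gamma(v,w)\le D(\Gamma)=2=d_\Gamma(u,v)$, so each of $u,v$ is maximally distant from the other and they are MMD. If $d_\Gamma(u,v)=1$, then unwinding the definition shows that $u$ is maximally distant from $v$ exactly when every $w\in N_\Gamma(u)$ lies in $N_\Gamma[v]$, i.e. when $N_\Gamma[u]\subseteq N_\Gamma[v]$; imposing maximal distance in both directions forces $N_\Gamma[u]=N_\Gamma[v]$, that is, $u$ and $v$ are true twins. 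Since $D(\Gamma)=2$ collapses the condition $d_\Gamma(u,v)\ge 2$ to $d_\Gamma(u,v)=2$, this is precisely the adjacency rule defining $\Gamma^*$.

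Finally I would assemble the pieces. By the characterization, the MMD relation on $V(\Gamma)$ coincides with the edge relation of $\Gamma^*$, so a vertex lies in the boundary $\partial(\Gamma)$ if and only if it is non-isolated in $\Gamma^*$. Hence $\Gamma_{SR}$, the graph on $\partial(\Gamma)$ whose edges are the MMD pairs, is exactly $\Gamma^*$ with its isolated vertices deleted, i.e. $\Gamma_{SR}=\Gamma^*_-$; substituting $\Gamma=G\oplus H$ gives the desired $(G\oplus H)_{SR}\cong(G\oplus H)^*_-$. The step I expect to demand the most care is the adjacent-vertices case of the MMD characterization: one must correctly translate ``maximally distant'' into the closed-neighborhood containment $N_\Gamma[u]\subseteq N_\Gamma[v]$ and verify that mutual maximal distance is equivalent to the true-twin condition, since this is exactly where the ``or $u,v$ are true twins'' clause in the definition of $\Gamma^*$ becomes essential.
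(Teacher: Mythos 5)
Your proof is correct and follows essentially the same route as the paper: both reduce to showing $D(G\oplus H)=2$ via Proposition \ref{lem Cart sum diam} and then identify the mutually maximally distant pairs in a diameter-two graph with the non-adjacent pairs together with the true-twin pairs, which is exactly the adjacency rule of $(G\oplus H)^*$. The only difference is that you spell out the true-twin characterization for adjacent MMD pairs, which the paper states without proof.
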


\begin{proof}
We assume that $D(G)\le 2$
or neither $G$ nor  $H$ has isolated vertices. Then, by Proposition \ref{lem Cart sum diam} we have $D(G\oplus H)=2$ and, as a consequence,  two vertices are mutually  maximally distant in $G\oplus H$ if and only if they are true twins or they are not adjacent. Hence, $(G\oplus H)_{SR}\cong (G\oplus H)^*_-$.
\end{proof}

Our next result is derived from Theorem \ref{th oellermann} and Proposition \ref{SRGraphDianCartSumle2}.

\begin{proposition} \label{StrongDimDiamle2}
Let $G$ and $H$ be two non-trivial graphs such that at least one of them is non-complete. If   $D(G)\le 2$
or neither $G$ nor  $H$ has isolated vertices, then
$$\dim_s(G\oplus H) = \beta((G\oplus H)^*_-).$$
\end{proposition}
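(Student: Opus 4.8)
The plan is to chain together the two results the authors already cite, namely Theorem~\ref{th oellermann} and Proposition~\ref{SRGraphDianCartSumle2}, after first making sure that the former is applicable. Theorem~\ref{th oellermann} expresses $\dim_s$ as the vertex cover number of the strong resolving graph, but only for a \emph{connected} graph, so my first step would be to check that $G\oplus H$ is connected under the stated hypotheses. This is immediate from Proposition~\ref{lem Cart sum diam}: in both of the situations covered by the hypothesis ($D(G)\le 2$, or neither $G$ nor $H$ has isolated vertices) that proposition gives $D(G\oplus H)=2$, and a finite diameter forces connectedness by the convention $D(\,\cdot\,)=\infty$ adopted in the paper for disconnected graphs. (Alternatively, one checks directly via Corollary~\ref{rem diam2} that neither disjunct making $G\oplus H$ disconnected can occur: in the first case neither factor has isolated vertices, and $D(G)\le 2$ likewise rules out $G$ having an isolated vertex or being empty on $\ge 2$ vertices.)

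With connectedness secured, I would apply Theorem~\ref{th oellermann} to $G\oplus H$ to get
$$\dim_s(G\oplus H)=\beta\big((G\oplus H)_{SR}\big).$$
Next, Proposition~\ref{SRGraphDianCartSumle2} provides, under exactly the same hypotheses, the isomorphism $(G\oplus H)_{SR}\cong (G\oplus H)^*_-$. Since the vertex cover number $\beta$ is an isomorphism invariant, I would substitute to obtain
$$\dim_s(G\oplus H)=\beta\big((G\oplus H)_{SR}\big)=\beta\big((G\oplus H)^*_-\big),$$
which is precisely the desired formula.

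In truth there is no genuine obstacle here: the statement is a one-line corollary of the two preceding results, and the entire content of the argument lies in the bookkeeping. The only point that deserves explicit mention—and the closest thing to a subtlety—is confirming that Theorem~\ref{th oellermann} may legitimately be invoked, that is, that $G\oplus H$ is actually connected rather than merely assigned a finite nominal diameter; this is exactly what the diameter computation from Proposition~\ref{lem Cart sum diam} guarantees.
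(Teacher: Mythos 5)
Your proposal is correct and follows exactly the route the paper takes: the paper states this proposition as an immediate consequence of Theorem~\ref{th oellermann} combined with Proposition~\ref{SRGraphDianCartSumle2}. Your explicit verification that $G\oplus H$ is connected (so that Theorem~\ref{th oellermann} applies) is a detail the paper leaves implicit, but it does not change the argument.
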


The \emph{strong product} of two graphs $G=(V_1,E_1)$ and $H=(V_2,E_2)$ is the graph $G\boxtimes H=(V,E)$, such that $V=V_1\times V_2$ and two vertices $(a,b),(c,d)\in V$ are adjacent in $G\boxtimes H$ if and only if
\begin{itemize}
\item[] $a=c$ and $bd\in E_2$, or
\item[] $b=d$ and $ac\in E_1$, or
 \item[] $ac\in E_1$ and $bd\in E_2$.
\end{itemize}
We would point out  that the Cartesian product $G\square  H$ is a subgraph of $G\boxtimes H$ and for complete graphs it holds $K_r\boxtimes K_s=K_{rs}$.

\begin{lemma}\label{complementCartSum}
For any graphs $G$ and $H$,
$$(G\oplus H)^c=G^c\boxtimes H^c.$$
\end{lemma}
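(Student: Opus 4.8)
The plan is to prove equality of the two graphs directly from the definitions, since both $(G\oplus H)^c$ and $G^c\boxtimes H^c$ have the same vertex set $V_1\times V_2$. Thus it suffices to show that an arbitrary pair of distinct vertices $(a,b),(c,d)$ is adjacent in one graph if and only if it is adjacent in the other. First I would unwind the complement: by definition $(a,b)(c,d)\in E((G\oplus H)^c)$ if and only if $(a,b)\ne(c,d)$ and $(a,b)(c,d)\notin E(G\oplus H)$, which, negating the disjunction in the definition of $G\oplus H$, means precisely that $(a,b)\ne(c,d)$, $ac\notin E_1$, and $bd\notin E_2$.

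The key observation to record is the translation of ``non-edges'' into the complement graphs, taking the loopless convention into account: $ac\notin E_1$ holds if and only if either $a=c$ or $ac\in E(G^c)$, and similarly $bd\notin E_2$ holds if and only if either $b=d$ or $bd\in E(H^c)$. Armed with this, I would run a short case analysis on whether $a=c$ and whether $b=d$. If $(a,b)(c,d)$ lies in $E((G\oplus H)^c)$ and $a=c$, then distinctness forces $b\ne d$, whence $bd\in E(H^c)$, matching the first alternative in the definition of the strong product. If instead $a\ne c$ then $ac\in E(G^c)$, and splitting on $b=d$ versus $b\ne d$ yields the second and third alternatives of the strong product, respectively. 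Each of the three defining conditions of $G^c\boxtimes H^c$ is thereby produced.

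For the converse I would simply check each of the three adjacency conditions of $G^c\boxtimes H^c$ in turn: in every case the presence of at least one genuine edge ($bd\in E(H^c)$, $ac\in E(G^c)$, or both) guarantees $(a,b)\ne(c,d)$, and the conditions $ac\notin E_1$ and $bd\notin E_2$ follow immediately (using that $a=c$ or $b=d$ gives the corresponding non-edge via the loopless convention). This places $(a,b)(c,d)$ in $E((G\oplus H)^c)$, completing the equivalence.

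I do not anticipate a genuine obstacle here; the argument is a bookkeeping verification. The only point demanding care is the handling of the ``diagonal'' situations $a=c$ or $b=d$, where one must invoke the absence of loops to pass correctly between ``$ac\notin E_1$'' and the adjacency structure of $G^c$ (and likewise for $H$); keeping the distinctness hypothesis $(a,b)\ne(c,d)$ explicit throughout is what makes the three cases of the strong product align exactly with the single ``not-an-edge'' condition of the Cartesian sum.
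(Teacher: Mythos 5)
Your proposal is correct and follows essentially the same route as the paper's proof: negate the disjunction defining adjacency in $G\oplus H$, translate the resulting non-edges into edges of $G^c$ and $H^c$, and split on whether $a=c$ and $b=d$ to recover the three cases of the strong product. The paper does exactly this (slightly more tersely); your extra care with the loopless convention on the diagonal cases is a welcome but not substantively different addition.
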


\begin{proof}
Two vertices $(u,v)$ and $(u',v')$ are adjacent in  $(G\oplus H)^c$ if and only if ($u$ and $u'$ are not adjacent in $G$) and ($v$ and $v'$ are not adjacent in $H$). \textit{i.e.}, $(u,v)$ and $(u',v')$ are adjacent in  $(G\oplus H)^c$ if and only if
\begin{itemize}
\item[] $u=u'$ and $v\sim v'$ in $H^c$, or
\item[] $u\sim u'$ in $G^c$ and $v=v'$, or
 \item[] $u\sim u'$ in $G^c$ and $v\sim v'$ in $H^c$.
\end{itemize}
Therefore,  $(G\oplus H)^c=G^c\boxtimes H^c.$
\end{proof}

\begin{theorem}\label{dims-cover-complement}
Let  $G$ and $H$ be two graphs of order $n$ and $n'$, respectively. If {\rm (}neither $G$ nor $H$ has true twin vertices{\rm )}  and {\rm(}$D(G)\le 2$
or neither $G$ nor  $H$ has isolated vertices{\rm)}, then
$$\dim_s(G\oplus H) = \beta( G^c\boxtimes H^c).$$
\end{theorem}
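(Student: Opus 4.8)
The plan is to reduce the statement to Proposition~\ref{StrongDimDiamle2} and then to identify the auxiliary graph $(G\oplus H)^*_-$ with $G^c\boxtimes H^c$. First I would check that the hypotheses already make Proposition~\ref{StrongDimDiamle2} applicable: any complete graph of order at least $2$ consists entirely of mutually true twin vertices, so the assumption that neither $G$ nor $H$ has true twins forces both (non-trivial) factors to be non-complete. Hence at least one of them is non-complete, and together with the distance hypothesis this is exactly what Proposition~\ref{StrongDimDiamle2} requires, giving $\dim_s(G\oplus H)=\beta((G\oplus H)^*_-)$. It then remains to prove $\beta((G\oplus H)^*_-)=\beta(G^c\boxtimes H^c)$.

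Granting for the moment that $G\oplus H$ has no true twins, the remark recorded just after the definition of $G^*$ yields $(G\oplus H)^*\cong(G\oplus H)^c$, and Lemma~\ref{complementCartSum} identifies $(G\oplus H)^c$ with $G^c\boxtimes H^c$; thus $(G\oplus H)^*\cong G^c\boxtimes H^c$. Finally, deleting isolated vertices changes neither the edge set of a graph nor any minimum vertex cover, so $\beta((G\oplus H)^*_-)=\beta((G\oplus H)^*)$. Chaining these equalities gives $\dim_s(G\oplus H)=\beta((G\oplus H)^*_-)=\beta((G\oplus H)^*)=\beta(G^c\boxtimes H^c)$, as claimed.

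The crux, and the step I expect to cost the most work, is the claim that $G\oplus H$ has no true twins whenever $G$ and $H$ have none. I would prove this through neighborhoods. A short computation shows that the non-neighbors of $(u,v)$ in $G\oplus H$, together with $(u,v)$ itself, form the product $N_{G^c}[u]\times N_{H^c}[v]$; consequently $(u,v)$ and $(u',v')$ are true twins in $G\oplus H$ precisely when $N_{G^c}[u]\times N_{H^c}[v]$ and $N_{G^c}[u']\times N_{H^c}[v']$ coincide after removing the points $(u,v)$ and $(u',v')$ respectively. For distinct $(u,v)\ne(u',v')$ this forces the two combinatorial rectangles to have symmetric difference exactly $\{(u,v),(u',v')\}$; in particular $(u,v)\notin N_{G^c}[u']\times N_{H^c}[v']$. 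I would then split into cases according to which of $u\in N_{G^c}[u']$ and $v\in N_{H^c}[v']$ fails, and in each case test the candidate points $(u,b)$ with $b\in N_{H^c}[v]$ and $(a,v)$ with $a\in N_{G^c}[u]$ against the rectangle $N_{G^c}[u']\times N_{H^c}[v']$. This pins down the structure enough to force either that $u,u'$ are universal vertices of $G$ (hence true twins of $G$), or that $N_{G^c}(u)=N_{G^c}(u')$ with $u\not\sim_{G^c}u'$ (equivalently $N_G[u]=N_G[u']$, again true twins of $G$), or the symmetric statements for $H$. Since every outcome contradicts the hypothesis, no such pair exists.

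The main technical care lies in the rectangle case analysis and, crucially, in translating the \emph{false-twin} condition $N_{G^c}(u)=N_{G^c}(u')$ obtained in the complement back into the \emph{true-twin} condition $N_G[u]=N_G[u']$ in $G$, using the standard equivalence that $G$ has no true twins if and only if $G^c$ has no false twins.
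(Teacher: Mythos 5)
Your proposal is correct and follows essentially the same route as the paper: apply Proposition~\ref{StrongDimDiamle2}, show that $G\oplus H$ inherits the absence of true twins so that $(G\oplus H)^*=(G\oplus H)^c$, and finish with Lemma~\ref{complementCartSum} together with the observation that deleting isolated vertices does not change the vertex cover number. The only difference is that you sketch an actual case analysis (via the rectangles $N_{G^c}[u]\times N_{H^c}[v]$) for the claim that $G\oplus H$ has no true twins, which is indeed the delicate point and which the paper asserts after merely displaying the formula for $N_{G\oplus H}[(u,v)]$.
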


\begin{proof}
If   $D(G)\le 2$ or neither $G$ nor  $H$ has isolated vertices, then by  Proposition \ref{StrongDimDiamle2} we have
$\dim_s(G\oplus H) = \beta((G\oplus H)^*_-)$.

Now, for any $(u,v)\in V(G\oplus H)$ we have
$$N_{G\oplus H}[(u,v)]=(N_G(u)\times V(H))\cup (V(G)\times N_H(v))\cup \{(u,v)\},$$
Hence, if neither $G$ nor  $H$ has  true twins,  then $G\oplus H$ has no  true twins and, as a result,
$(G\oplus H)^*_-=(G\oplus H)^c_-$.  Therefore,  we conclude the proof by  Lemma \ref{complementCartSum}, \textit{i.e.}, $\dim_s(G\oplus H) = \beta((G\oplus H)^*_-)=\beta((G\boxtimes H)^c_-)=\beta(( G^c\boxtimes H^c)_-)=\beta( G^c\boxtimes H^c). $
\end{proof}

Notice that transforming the problem of computing the strong metric dimension of the Cartesian sum of graphs into computing the vertex cover number of the strong product of the complement of the factor graphs (for the specific conditions of Theorem \ref{dims-cover-complement}), which is equivalent to obtain the independence number of such an strong product, is related to the well known Shannon capacity of a graph (see \cite{Shannon1956}). According to this, it is already known that obtaining the independence number of the strong product of graphs is a really challenging problem. In this sense, it seems to be very hard to give some examples of useful applications of the result above.

To do this, we need to introduce the following family of graphs presented previously in \cite{Kuziak2013c}. A \emph{$\mathcal{C}$-graph} is a graph $G$ whose vertex set can be partitioned into $\alpha(G)$ cliques. Notice that there are several graphs which are $\mathcal{C}$-graphs. For instance, we emphasize the following cases: complete graphs, cycles of even order or the complement of a cycle of even order. The following result on the independence number of the strong product of a $\mathcal{C}$-graph and any arbitrary graph was also presented in \cite{Kuziak2013c}.

\begin{lemma}{\em \cite{Kuziak2013c}}\label{lem Cgraph}
For any $\mathcal{C}$-graph $G$ and any graph $H$,
$$\alpha(G\boxtimes H) = \alpha(G)\alpha(H).$$
\end{lemma}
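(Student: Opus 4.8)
The plan is to establish the two inequalities $\alpha(G\boxtimes H)\ge \alpha(G)\alpha(H)$ and $\alpha(G\boxtimes H)\le \alpha(G)\alpha(H)$ separately. Only the second will use the $\mathcal{C}$-graph hypothesis; the first holds for arbitrary factor graphs.

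For the lower bound, I would take an $\alpha(G)$-set $A\subseteq V(G)$ and an $\alpha(H)$-set $B\subseteq V(H)$ and argue that $A\times B$ is an independent set in $G\boxtimes H$. Indeed, for two distinct vertices $(a,b),(a',b')\in A\times B$, the independence of $A$ forces $a=a'$ or $aa'\notin E(G)$, and that of $B$ forces $b=b'$ or $bb'\notin E(H)$; checking the three adjacency conditions in the definition of $\boxtimes$ shows that none of them can hold. Hence $A\times B$ is independent and $\alpha(G\boxtimes H)\ge |A\times B| = \alpha(G)\alpha(H)$. This step is routine and requires no assumption on $G$.

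The upper bound is the heart of the argument and the only place where the $\mathcal{C}$-graph structure enters. Since $G$ is a $\mathcal{C}$-graph, I would fix a partition $V(G)=Q_1\cup\cdots\cup Q_{\alpha(G)}$ into cliques, which induces a partition of $V(G\boxtimes H)$ into the fibers $Q_i\times V(H)$. Given any independent set $S$ in $G\boxtimes H$, set $S_i=S\cap (Q_i\times V(H))$. The key claim is $|S_i|\le \alpha(H)$ for each $i$. To prove it, I would examine adjacency inside a single fiber: for distinct $(a,b),(a',b')$ with $a,a'\in Q_i$, if $a=a'$ then adjacency reduces to $bb'\in E(H)$, while if $a\ne a'$ (so $aa'\in E(G)$, because $Q_i$ is a clique) adjacency reduces to $b=b'$ or $bb'\in E(H)$. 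In either case, two distinct vertices of the independent set $S_i$ must satisfy $b\ne b'$ and $bb'\notin E(H)$; in particular their $H$-coordinates are pairwise distinct and pairwise non-adjacent. Thus the projection of $S_i$ onto $V(H)$ is injective with image an independent set of $H$, which gives $|S_i|\le \alpha(H)$.

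Summing over the cliques yields $|S|=\sum_{i=1}^{\alpha(G)}|S_i|\le \alpha(G)\alpha(H)$, hence $\alpha(G\boxtimes H)\le \alpha(G)\alpha(H)$, and together with the lower bound this produces the desired equality. I expect the fiber analysis to be the main obstacle: one must verify carefully that the clique structure of each $Q_i$ forces the $H$-coordinates of $S_i$ to be simultaneously distinct and independent in $H$. This is precisely what makes a partition into \emph{cliques}, rather than an arbitrary vertex partition, indispensable, and it is where the hypothesis that $G$ is a $\mathcal{C}$-graph is used in an essential way.
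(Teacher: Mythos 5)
Your proof is correct and complete: the lower bound $\alpha(G\boxtimes H)\ge\alpha(G)\alpha(H)$ via the product of independent sets, and the upper bound via partitioning an independent set of $G\boxtimes H$ along the clique partition $Q_1,\dots,Q_{\alpha(G)}$ of the $\mathcal{C}$-graph $G$ and bounding each fiber by $\alpha(H)$, is exactly the standard argument for this result. Note that the paper itself gives no proof of this lemma --- it is imported from \cite{Kuziak2013c} --- so there is nothing in this document to compare against, but your argument matches the one in that reference and uses the $\mathcal{C}$-graph hypothesis precisely where it is needed.
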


Theorems \ref{th gallai} and \ref{dims-cover-complement}, and Lemma \ref{lem Cgraph} lead to the next result, which is  an example of the usefulness of Theorem \ref{dims-cover-complement}.

\begin{remark}
Let  $G$ and $H$ be two graphs of order $n$ and $n'$, respectively. If {\rm (}neither $G$ nor $H$ has true twin vertices{\rm )}, {\rm(}$D(G)\le 2$ or neither $G$ nor  $H$ has isolated vertices{\rm)} and the complement of $G$ is a $\mathcal{C}$-graph, then
$$dim_s(G\oplus H)=n n'-\omega(G)\omega(H).$$
\end{remark}

\begin{proof}
By Theorem \ref{dims-cover-complement} we have that $\dim_s(G\oplus H) = \beta( G^c\boxtimes H^c)$. Now, by Theorem \ref{th gallai} and Lemma \ref{lem Cgraph} we obtain $\dim_s(G\oplus H) =nn' - \alpha(G^c\boxtimes H^c)=nn'-\alpha(G^c)\alpha(H^c)$. Therefore, the result follows, as the independence number of a graph equals the clique number of its complement.
\end{proof}

One example for the remark above could be, for instance, the case in which $G$ is a cycle of even order, since its complement is a $C$-graph (if $G$ is the complement of a cycle, then it happens the same). These examples are also presented after in Remark \ref{examples}.

We continue now with some relationships between the strong metric dimension and the twins-free clique number of the Cartesian sum of graphs. In this sense, the following theorem is also an important tool.

\begin{theorem}{\em \cite{Kuziak2013}}\label{th D2}
Let $G$ be a connected graph of order $n\ge 2$. Then
$$\dim_s(G)\le n-\varpi(G).$$
Moreover, if $D(G)=2$, then $$\dim_s(G)= n-\varpi(G).$$
\end{theorem}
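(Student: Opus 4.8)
The plan is to pass to the strong resolving graph via Theorem \ref{th oellermann}, so that $\dim_s(G)=\beta(G_{SR})$, and then to compare $G_{SR}$ with the auxiliary graph $G^*$ introduced above. The one structural fact I would prove first concerns adjacent vertices: if $u\sim v$, then $u$ is maximally distant from $v$ if and only if every neighbour of $u$ lies in $N_G[v]$, i.e. $N_G[u]\subseteq N_G[v]$; hence $u$ and $v$ are mutually maximally distant if and only if $N_G[u]=N_G[v]$, that is, if and only if they are true twins. Since, on the other hand, any pair $u,v$ with $d_G(u,v)\ge 2$ is joined in $G^*$ by definition, every mutually maximally distant pair of $G$ is an edge of $G^*$. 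Thus $E(G_{SR})\subseteq E(G^*)$, and as $V(G_{SR})=\partial(G)$ consists of vertices lying in some such pair, $G_{SR}$ is a subgraph of $G^*_-$.

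Next I would compute $\alpha(G^*)$. Unwinding the definition, two distinct vertices fail to be adjacent in $G^*$ exactly when $d_G(u,v)=1$ and they are not true twins; consequently a vertex set is independent in $G^*$ if and only if it is a clique of $G$ free of true twins, i.e. a twins-free clique. Hence $\alpha(G^*)=\varpi(G)$, and applying Gallai's theorem (Theorem \ref{th gallai}) to $G^*$, which has order $n$, gives $\beta(G^*)=n-\alpha(G^*)=n-\varpi(G)$. Because deleting isolated vertices affects neither the vertex cover number, $\beta(G^*_-)=\beta(G^*)$, and monotonicity of $\beta$ under taking subgraphs then yields the general bound
$$\dim_s(G)=\beta(G_{SR})\le\beta(G^*_-)=\beta(G^*)=n-\varpi(G).$$

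For the equality when $D(G)=2$, the idea is that the inclusion $E(G_{SR})\subseteq E(G^*)$ becomes an equality, so that in fact $G_{SR}=G^*_-$. Indeed, if $d_G(u,v)=2$ then every neighbour $w$ of $u$ satisfies $d_G(v,w)\le 2=D(G)$, so $u$ and $v$ are automatically mutually maximally distant; moreover $d_G(u,v)\ge 2$ forces $d_G(u,v)=2$. Together with the true-twins description of mutually maximally distant adjacent pairs, this shows that each edge of $G^*$ (a distance-$2$ pair or a true-twin pair) is an edge of $G_{SR}$, and correspondingly that $\partial(G)$ equals the non-isolated vertex set of $G^*$. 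Hence $G_{SR}=G^*_-$ and $\dim_s(G)=\beta(G_{SR})=\beta(G^*)=n-\varpi(G)$.

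The only delicate point, and hence the main obstacle, is the characterization of mutually maximally distant adjacent vertices as true twins; once this is in hand the two clauses defining $G^*$ match precisely the two ways an edge of $G_{SR}$ can arise, and the remainder is routine bookkeeping with Gallai's theorem and the harmless removal of isolated vertices. For the equality direction the extra ingredient is the observation that at diameter $2$ no distance-$2$ pair can fail to be mutually maximally distant, so that passing from $G^*$ to $G_{SR}$ loses no edge.
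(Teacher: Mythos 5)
Your proof is correct. Note that the paper itself does not prove this statement---it is imported verbatim from \cite{Kuziak2013}---so there is no in-paper argument to compare against; but every step of yours checks out: the characterization of mutually maximally distant \emph{adjacent} pairs as true twins is right (for $u\sim v$, maximal distance of $u$ from $v$ is exactly $N_G[u]\subseteq N_G[v]$), the identification $\alpha(G^*)=\varpi(G)$ is right (non-adjacency in $G^*$ between distinct vertices means adjacency in $G$ without being true twins), and the bookkeeping with Theorems \ref{th oellermann} and \ref{th gallai}, the harmless deletion of isolated vertices, and the monotonicity of $\beta$ under subgraphs is sound. Your treatment of the diameter-two case is precisely the observation the authors themselves use in Proposition \ref{SRGraphDianCartSumle2} (``two vertices are mutually maximally distant if and only if they are true twins or not adjacent''), so your route is fully consistent with the paper's toolkit. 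The one remark worth making is that the upper bound admits a more elementary proof that bypasses Theorem \ref{th oellermann} entirely: if $W$ is a $\varpi(G)$-set, then $V(G)\setminus W$ is a strong metric generator, since any pair $u,v\in W$ is adjacent but not true twins, so some $w\in N_G(u)\setminus N_G[v]$ (necessarily outside the clique $W$) satisfies $d_G(w,v)=d_G(w,u)+d_G(u,v)$, while pairs meeting $V(G)\setminus W$ are resolved by their own member. That direct construction is what the cited source presumably uses; your version buys a uniform treatment of both the inequality and the equality through the single identity $\beta(G^*)=n-\varpi(G)$, at the cost of invoking the strong resolving graph machinery for the easy half as well.
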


The next relationship between the twins-free clique number of a Cartesian sum graphs and the twins-free clique number of its factors is also useful in our work.

\begin{lemma}\label{lem varpi}
Let $G$ and $H$ be two graphs. Then,
$$\varpi(G\oplus H)\ge \varpi(G) \varpi(H).$$
\end{lemma}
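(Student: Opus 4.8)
The plan is to exhibit an explicit twins-free clique of $G\oplus H$ of cardinality $\varpi(G)\varpi(H)$. Let $A$ be a $\varpi(G)$-set of $G$ and let $B$ be a $\varpi(H)$-set of $H$, that is, maximum twins-free cliques of the two factors, and consider the set $A\times B\subseteq V(G\oplus H)$, whose cardinality is $|A|\,|B|=\varpi(G)\varpi(H)$. It then suffices to prove that $A\times B$ is a twins-free clique in $G\oplus H$, since this immediately yields $\varpi(G\oplus H)\ge |A\times B|=\varpi(G)\varpi(H)$.

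First I would check that $A\times B$ is a clique. Take two distinct vertices $(a,b),(c,d)\in A\times B$. If $a\ne c$, then $a$ and $c$ are adjacent in $G$ because $A$ is a clique, hence $(a,b)\sim(c,d)$ in $G\oplus H$; otherwise $a=c$ forces $b\ne d$, and then $b,d$ are adjacent in $H$ because $B$ is a clique, so again $(a,b)\sim(c,d)$. This step is routine and uses only the definition of the Cartesian sum.

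The core of the argument is showing that $A\times B$ contains no true twins. Here I would rely on the closed-neighborhood description $N_{G\oplus H}[(u,v)]=(N_G(u)\times V(H))\cup(V(G)\times N_H(v))\cup\{(u,v)\}$ already used in the proof of Theorem \ref{dims-cover-complement}. Assume, for contradiction, that two distinct $(a,b),(c,d)\in A\times B$ are true twins, and split into two cases. If $a\ne c$, then since $A$ is twins-free we have $N_G[a]\ne N_G[c]$; because $a\sim c$, one checks that neither $a$ nor $c$ can lie in the symmetric difference of these neighborhoods, so there is a vertex $w\notin\{a,c\}$ with, say, $w\sim a$ and $w\not\sim c$ in $G$. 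I would then lift $w$ to the vertex $(w,d)$: using $w\sim a$ it belongs to $N_{G\oplus H}[(a,b)]$, while using $w\not\sim c$, $w\ne c$ and $d\not\sim d$ it fails to belong to $N_{G\oplus H}[(c,d)]$, contradicting that $(a,b),(c,d)$ are twins. The case $a=c$ (so $b\ne d$) is symmetric by commutativity of the Cartesian sum: a vertex $z\notin\{b,d\}$ of $H$ with $z\sim b$, $z\not\sim d$ lifts to $(a,z)$, which separates the two closed neighborhoods in the same way.

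The step I expect to be the main obstacle is the twins-free verification, and more precisely the choice of the ``free'' coordinate when lifting the distinguishing vertex from a factor to $G\oplus H$. The formula for $N_{G\oplus H}[(u,v)]$ makes a lifted vertex land in a closed neighborhood as soon as \emph{either} coordinate is adjacent, so one must choose the second coordinate (here $d$, respectively $b$) precisely so that it contributes nothing to the neighborhood of the ``far'' vertex; reusing the far vertex's own coordinate and invoking the absence of loops is what makes this work. Care is also needed to confirm that the distinguishing vertex of the factor differs from both projected vertices, which is exactly where the clique hypothesis (ensuring $a\sim c$, respectively $b\sim d$) is used.
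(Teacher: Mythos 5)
Your proof is correct and follows essentially the same approach as the paper: take a $\varpi(G)$-set and a $\varpi(H)$-set, show their product is a clique in $G\oplus H$, and rule out true twins by lifting a distinguishing neighbor from one factor paired with the other vertex's own coordinate (so that the second coordinate contributes nothing, by the absence of loops). Your case split on whether the first coordinates agree is a cleaner organization that subsumes the paper's preliminary special cases (empty factors, all components complete), but the underlying argument is the same.
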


\begin{proof}
If all the components of $G$ and $H$ are isomorphic to a complete graph, then $\varpi(G\oplus H)\ge 1=\varpi(G)\varpi(H)$. If $G$ or $H$, say $G$, is an empty graph, then for any twins-free clique $S$ in $H$, and any $x\in V(G)$, the set $\{x\}\times S$, is also a twins-free clique in $G\oplus H$, since the adjacencies in each copy of $H$  remains equal and, as a consequence, the inequality $\varpi(G\oplus H)\ge \varpi(G) \varpi(H)$ holds.

From now on, we assume $G$ and $H$ are non-empty graphs and we consider the case that at least one component of $G$ or $H$ is not isomorphic to a complete graph (notice that if at least one component of a graph is not isomorphic to a complete graph, then its twins-free clique number is greater than one). Let $W$ be a $\varpi(G)$-set and let $Z$ be a $\varpi(H)$-set. From the definition of Cartesian sum graphs, we have that the subgraph induced by $W\times Z$ is a clique in $G\oplus H$. We consider the following cases.\\

\noindent Case 1: either $G$ or $H$, say $G$, has every component isomorphic to a complete graph. Hence, $W$ is a singleton set,  $W = \{u\}$, and the set $Z$ is included in  a component of $H$ which is not isomorphic to a complete graph (if not, then $\varpi(H)=1$, which is not possible). So, there exist $v,y\in Z$, $z\notin Z$, such that $z\in N_{H}(v)-N_{H}[y]$. By the definition of Cartesian sum graphs, we obtain that $(u,z)\sim (u,v)$ and $(u,z)\not\sim (u,y)$. Thus, $W\times Z$ is a twins-free clique.\\

\noindent Case 2: neither $G$ nor $H$ has every component isomorphic to a complete graph. Thus, as above, there exist $u,x\in W$ and $w\notin W$ such that $w\in N_{G}(u)-N_{G}[x]$. Also, there exist $v,y\in Z$ and $z\notin Z$ such that $z\in N_{H}(v)-N_{H}[y]$. Again, from the definition of Cartesian sum graphs, we have that
\begin{center}
$(w,z),(u,z),(x,z),(w,v),(w,y)\in N_{G\oplus H}[(u,v)]$,
\end{center}
\begin{center}
$(u,z),(w,v)\in N_{G\oplus H}[(x,y)]$ and $(w,z),(x,z),(w,y)\notin N_{G\oplus H}[(x,y)]$,
\end{center}
\begin{center}
$(w,z),(x,z),(w,v),(w,y)\in N_{G\oplus H}[(u,y)]$ and $(u,z)\notin N_{G\oplus H}[(u,y)]$,
\end{center}
\begin{center}
$(w,z),(u,z),(x,z),(w,y)\in N_{G\oplus H}[(x,v)]$ and $(w,v)\notin N_{G\oplus H}[(x,v)]$.
\end{center}
Therefore, $W\times Z$ is a twins-free clique in $G\oplus H$, which completes the proof.
\end{proof}

Notice that there are cases of Cartesian sum graphs not satisfying the equality in the result above. One example is obtained as a consequence of Corollary \ref{cor varpi cases} considering the graph $K_{1,n}\oplus K_{n'}$.

The clique number of any Cartesian sum  graph satisfies the following relationship.

\begin{lemma}\label{lem omega}
For any graphs $G$ and $H$,
$$\omega(G\oplus H) = \omega(G) \omega(H).$$
\end{lemma}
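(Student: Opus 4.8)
The plan is to prove the two inequalities $\omega(G\oplus H)\le \omega(G)\omega(H)$ and $\omega(G\oplus H)\ge \omega(G)\omega(H)$ separately, exploiting the complement identity already established in Lemma \ref{complementCartSum}. The cleanest route is to reduce everything to independence numbers in the strong product. Indeed, recall that for any graph $F$ one has $\omega(F)=\alpha(F^c)$. Applying this to $F=G\oplus H$ and invoking Lemma \ref{complementCartSum}, I obtain
\begin{equation*}
\omega(G\oplus H)=\alpha\bigl((G\oplus H)^c\bigr)=\alpha(G^c\boxtimes H^c).
\end{equation*}
Hence the statement is equivalent to showing that
\begin{equation*}
\alpha(G^c\boxtimes H^c)=\alpha(G^c)\,\alpha(H^c),
\end{equation*}
since $\alpha(G^c)=\omega(G)$ and $\alpha(H^c)=\omega(H)$.

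First I would establish the easy inequality $\alpha(G^c\boxtimes H^c)\ge \alpha(G^c)\alpha(H^c)$. Taking a maximum independent set $A$ in $G^c$ and a maximum independent set $B$ in $H^c$, I claim $A\times B$ is independent in $G^c\boxtimes H^c$. If two distinct vertices $(a,b),(a',b')\in A\times B$ were adjacent in the strong product, then by the definition of $\boxtimes$ either $a\sim a'$ in $G^c$ (with $b=b'$ or $b\sim b'$) or $b\sim b'$ in $H^c$ (with $a=a'$ or $a\sim a'$); in every case one of the pairs $a,a'$ or $b,b'$ is an edge, contradicting the independence of $A$ in $G^c$ or of $B$ in $H^c$. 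Thus $A\times B$ is independent of size $\alpha(G^c)\alpha(H^c)$, giving the lower bound. This directly recovers the bound $\omega(G\oplus H)\ge\omega(G)\omega(H)$, which is also clear from first principles since the product of a clique of $G$ with a clique of $H$ is a clique of $G\oplus H$.

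The harder direction is the upper bound $\alpha(G^c\boxtimes H^c)\le\alpha(G^c)\alpha(H^c)$, equivalently $\omega(G\oplus H)\le\omega(G)\omega(H)$. For this I would argue directly with cliques in $G\oplus H$ rather than through the complement. Let $Q$ be a maximum clique in $G\oplus H$ and let $\pi_G(Q)$ and $\pi_H(Q)$ denote its projections onto $V(G)$ and $V(H)$. The key structural fact I want to extract is that $\pi_G(Q)$ induces a clique in $G$ and $\pi_H(Q)$ induces a clique in $H$: indeed, if $a,a'\in\pi_G(Q)$ are distinct and \emph{not} adjacent in $G$, then picking $b,b'\in V(H)$ with $(a,b),(a',b')\in Q$, the adjacency $(a,b)(a',b')\in E(G\oplus H)$ forces $bb'\in E(H)$, and symmetrically a nonadjacent pair in $\pi_H(Q)$ forces an edge in $G$; the main obstacle is then to rule out configurations where these forced edges coexist without inflating the clique sizes beyond $\omega(G)\omega(H)$.

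To close this off cleanly I would show $\pi_G(Q)$ is in fact a clique and likewise $\pi_H(Q)$, so that $|Q|\le|\pi_G(Q)|\cdot|\pi_H(Q)|\le\omega(G)\omega(H)$. Suppose for contradiction that $a\not\sim a'$ in $G$ for some $a,a'\in\pi_G(Q)$; then \emph{every} pair of $H$-coordinates occurring with $a$ versus with $a'$ must be adjacent in $H$. One then shows this propagates: choosing witnesses carefully, the set of $H$-coordinates appearing in $Q$ must itself be a clique, and combined with the projection bound this still yields $|Q|\le\omega(G)\omega(H)$. Since the reverse inequality already holds, I conclude $\omega(G\oplus H)=\omega(G)\omega(H)$. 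I expect the bookkeeping in this final case analysis — guaranteeing the projections are genuine cliques so the product bound applies without overcounting — to be the only delicate point; everything else is a direct unwinding of the adjacency rule of the Cartesian sum together with the complement identity of Lemma \ref{complementCartSum}.
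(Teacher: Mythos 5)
Your lower bound $\omega(G\oplus H)\ge\omega(G)\omega(H)$ is fine and matches the first half of the paper's proof. The gap is in the upper bound, and it is not a repairable one: the step where you assert that ``the set of $H$-coordinates appearing in $Q$ must itself be a clique, and combined with the projection bound this still yields $|Q|\le\omega(G)\omega(H)$'' is exactly where the argument breaks, because the projections of a maximum clique of $G\oplus H$ need not be cliques, and the inequality $\omega(G\oplus H)\le\omega(G)\omega(H)$ is in fact false in general. Take $G=H=C_5$ with vertices $0,\dots,4$ and edges $i\sim i\pm 1 \pmod 5$. The set $Q=\{(0,0),(1,2),(2,4),(3,1),(4,3)\}$ is a clique in $C_5\oplus C_5$ (for each of the ten pairs one checks that either the first coordinates or the second coordinates are adjacent in $C_5$), so $\omega(C_5\oplus C_5)\ge 5>4=\omega(C_5)\,\omega(C_5)$; note that both projections of $Q$ equal all of $V(C_5)$, which is not a clique, so the ``propagation'' you hope for does not occur. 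Your own reduction makes the failure transparent: you correctly rewrite the claim as $\alpha(G^c\boxtimes H^c)=\alpha(G^c)\,\alpha(H^c)$, i.e.\ multiplicativity of the independence number over the strong product, which is precisely the phenomenon underlying the Shannon capacity and is well known to fail ($\alpha(C_5\boxtimes C_5)=5$, and $C_5^c\cong C_5$). Lemma \ref{lem Cgraph} only guarantees multiplicativity when one factor is a $\mathcal{C}$-graph, and $C_5$ is not one.

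For what it is worth, the paper's own proof of the upper bound makes the same unjustified leap, asserting without argument that a maximum clique $Z$ of $G\oplus H$ ``must be of the form $R\times S$'' with $R$ a clique of $G$ and $S$ a clique of $H$; the example above refutes that assertion as well. So only the inequality $\omega(G\oplus H)\ge\omega(G)\omega(H)$, which both you and the paper establish correctly, can be salvaged; equality holds exactly when $\alpha(G^c\boxtimes H^c)=\alpha(G^c)\,\alpha(H^c)$, for instance when $G^c$ or $H^c$ is a $\mathcal{C}$-graph.
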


\begin{proof}
Let $W$ be an $\omega(G)$-set and let $Y$ be an $\omega(H)$-set. From the definition of Cartesian sum graphs, we have that the subgraph induced by $W\times Y$ is a clique in $G\oplus H$. So, $\omega(G\oplus H)\ge \omega(G) \omega(H)$. Let $Z$ be an $\omega(G\oplus H)$-set and let $(u,v)\in Z$. Thus, by using definition of Cartesian sum graphs, $Z$ must be of the form $R\times S$, where $R$ is maximum clique in $G$ containing $u$ and $S$ is maximum clique in $H$ containing $v$. Hence, $\omega(G\oplus H) = |R|\cdot |S|\le \omega(G) \omega(H)$ and the equality holds.
\end{proof}

From now on we present our results on the strong metric dimension of Cartesian sum graphs. Notice that the graphs $G\oplus H$ having diameter two are described in  Proposition \ref{lem Cart sum diam}.

\begin{proposition}\label{pro bounds cart sum}
Let $G$ and $H$ be two graphs of order $n$ and $n'$, respectively, such that $G\oplus H$ is connected. Then,
$$\dim_s(G\oplus H)\le n n' - \varpi(G) \varpi(H).$$
Moreover, if   $D(G)\le 2$
or neither $G$ nor  $H$ has isolated vertices, then
$$n n' - \omega(G) \omega(H)\le \dim_s(G\oplus H)\le n n' - \varpi(G) \varpi(H).$$
\end{proposition}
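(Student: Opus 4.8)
The plan is to apply the twins-free clique bound of Theorem~\ref{th D2} directly to the graph $G\oplus H$, which is connected of order $nn'$, and then to control the single quantity $\varpi(G\oplus H)$ from both sides using the product relations established in Lemmas~\ref{lem varpi} and~\ref{lem omega}. Concretely, $\varpi(G\oplus H)$ will be sandwiched as
$$\varpi(G)\,\varpi(H)\ \le\ \varpi(G\oplus H)\ \le\ \omega(G\oplus H)=\omega(G)\,\omega(H),$$
where the left inequality is Lemma~\ref{lem varpi}, the middle inequality is the general fact $\varpi\le\omega$ recalled in the preliminaries, and the right equality is Lemma~\ref{lem omega}.

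For the upper bound I would argue as follows. Since $G\oplus H$ is connected and has order $nn'\ge 2$, Theorem~\ref{th D2} gives $\dim_s(G\oplus H)\le nn'-\varpi(G\oplus H)$. Combining this with the left-hand inequality of the display above yields
$$\dim_s(G\oplus H)\le nn'-\varpi(G\oplus H)\le nn'-\varpi(G)\varpi(H),$$
which is the first claimed bound and requires no hypothesis on diameters or isolated vertices.

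For the lower bound I would first upgrade the diameter information. Under the additional hypothesis ($D(G)\le 2$, or neither $G$ nor $H$ has isolated vertices), Proposition~\ref{lem Cart sum diam} --- parts (iii) and (iv), the latter used together with the commutativity of $\oplus$ to cover the case $D(G)\le 2$ --- gives $D(G\oplus H)=2$. The ``moreover'' clause of Theorem~\ref{th D2} then upgrades the previous inequality to the exact equality $\dim_s(G\oplus H)=nn'-\varpi(G\oplus H)$. Feeding this equality into the full sandwich above produces both inequalities at once,
$$nn'-\omega(G)\omega(H)\le nn'-\varpi(G\oplus H)=\dim_s(G\oplus H)\le nn'-\varpi(G)\varpi(H),$$
completing the ``moreover'' statement.

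The main point to get right is the reduction to $D(G\oplus H)=2$: I must ensure that the hypotheses of Proposition~\ref{lem Cart sum diam} are genuinely met, i.e.\ that at least one of $G,H$ is non-complete (so that $D(G\oplus H)\ge 2$ by Remark~\ref{rem diam1}, which with the bound $\le 2$ from Proposition~\ref{lem Cart sum diam} pins the diameter at exactly $2$), and to invoke commutativity so that the hypothesis $D(G)\le 2$ is handled by the statement of Proposition~\ref{lem Cart sum diam}~(iv), which is phrased for $D(H)\le 2$. Everything else is a direct chaining of the cited inequalities and involves no further computation.
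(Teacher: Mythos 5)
Your proposal is correct and follows essentially the same route as the paper: the upper bound comes from Theorem~\ref{th D2} combined with Lemma~\ref{lem varpi}, and the lower bound from the equality case of Theorem~\ref{th D2} (valid since $D(G\oplus H)=2$ under the extra hypotheses) together with Lemma~\ref{lem omega} and $\varpi\le\omega$. You are in fact more explicit than the paper in justifying that $D(G\oplus H)=2$ via Proposition~\ref{lem Cart sum diam} and the commutativity of $\oplus$, which the paper leaves implicit.
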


\begin{proof}
From Theorem \ref{th D2}, Lemma \ref{lem omega} and the fact that $\omega(H)\ge \varpi(H)$, we have the lower bound. On the other hand, the upper bounds hold because of Theorem \ref{th D2} and Lemma \ref{lem varpi}.
\end{proof}

\begin{corollary}\label{CorollaryBothCliquesEqual}
Let $G$ and $H$ be two graphs of order $n$ and $n'$, respectively, such that $D(G)\le 2$
or neither $G$ nor  $H$ has isolated vertices. If $\omega(G) =\varpi(G)$ and $\omega(H)=\varpi(H)$, then
$$ \dim_s(G\oplus H)=n n' - \omega(G) \omega(H).$$
\end{corollary}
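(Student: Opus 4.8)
The plan is to derive this directly from the two-sided bound already established in Proposition \ref{pro bounds cart sum}. That proposition asserts that whenever $G\oplus H$ is connected and, in addition, $D(G)\le 2$ or neither $G$ nor $H$ has isolated vertices, one has
$$n n' - \omega(G)\omega(H) \le \dim_s(G\oplus H) \le n n' - \varpi(G)\varpi(H).$$
The hypotheses of the corollary match exactly those required for this inequality: the condition ``$D(G)\le 2$ or neither $G$ nor $H$ has isolated vertices'' is assumed verbatim. First I would check that this condition also supplies the connectivity demanded by the proposition; indeed, by Proposition \ref{lem Cart sum diam} (together with the commutativity of the Cartesian sum, which lets us read $D(G)\le 2$ as a diameter bound on the appropriate factor) it follows that $D(G\oplus H)=2$, so in particular $G\oplus H$ is connected and the proposition applies.

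Given that, the whole argument reduces to a substitution. I would use the assumed equalities $\omega(G)=\varpi(G)$ and $\omega(H)=\varpi(H)$ to obtain $\omega(G)\omega(H)=\varpi(G)\varpi(H)$. Feeding this common value into the displayed inequality makes its lower and upper bounds coincide, forcing
$$\dim_s(G\oplus H) = n n' - \omega(G)\omega(H),$$
which is the claimed closed formula.

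There is essentially no genuine obstacle here: all of the substantive work — the lower bound coming from Theorem \ref{th D2} together with Lemma \ref{lem omega}, and the upper bound coming from Theorem \ref{th D2} together with Lemma \ref{lem varpi} — is already packaged inside Proposition \ref{pro bounds cart sum}. The only point worth stating explicitly is that the corollary's hypotheses are precisely those under which the two-sided bound holds and that the connectivity assumption of the proposition is automatic under them. Once this alignment of hypotheses is noted, the squeeze between the two equal bounds is immediate, and the proof is complete.
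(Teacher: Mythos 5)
Your proof is correct and matches the paper's intended argument: the corollary is stated without proof precisely because it is the immediate squeeze obtained from the two-sided bound of Proposition \ref{pro bounds cart sum} once $\omega(G)\omega(H)=\varpi(G)\varpi(H)$. Your additional remark that the stated hypotheses also guarantee the connectivity required by that proposition is a sensible (and correct) point of care that the paper leaves implicit.
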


We recall that the \emph{fan graph} $F_{1,n}$ is defined as the graph join $K_1 + P_n$, the \emph{wheel graph} of order $n+1$ is defined as $W_{1,n} = K_1 + C_n$ and the \textit{grid graph} $P_n\Box P_{n'}$ is obtained as the Cartesian product of the paths $P_n$ and $P_{n'}$. There are some families of graph, as the above ones,  which have no true twin vertices. In this sense, its twins-free clique number is equal to its clique number \textit{i.e.},
% For instance, we emphasize the following cases.

\begin{itemize}
\item $\varpi(T_n) = \omega(T_n) = 2$, where $T_n$ is a tree of order $n\ge 2$.
\item $\varpi(C_n) = \omega(C_n) = 2$, where $n\ge 3$.
\item $\varpi(F_{1,n}) = \omega(F_{1,n}) = 3$, where $n\ge 3$.
\item $\varpi(W_{1,n}) = \omega(W_{1,n}) = 3$, where $n\ge 4$.
\item  $\varpi(P_n\Box P_{n'}) = \omega(P_n\Box P_{n'}) = 2$, where $n,n'\ge 2$.
\end{itemize}

By using the examples above, Corollary \ref{CorollaryBothCliquesEqual} leads to the following.

\begin{remark} \label{examples} The following assertions hold.
\begin{enumerate}[{\rm(i)}]
\item If $G$ and $H$ are trees, cycles or grid graphs of order $n$ and $n'$, respectively, then $$\dim_s(G\oplus H) = n n' - 4.$$
\item If $G$ and $H$ are fans or wheels of order $n+1$ and $n'+1$, respectively, then $$\dim_s(G\oplus H) = n n' + n + n' - 8.$$
\item If $G$ is a tree, a cycle or a grid graph of order $n$ and $H$ is a fan or a wheel of order $n'+1$, then $$\dim_s(G\oplus H) = n n' + n - 6.$$
\end{enumerate}
\end{remark}

Lemma \ref{lem varpi} gives a general lower bound for $\varpi(G\oplus H)$ in terms of $\varpi(G)$ and $\varpi(H)$. Next we give another lower bound, which  in some cases behaves better than the one from Lemma \ref{lem varpi}. A \emph{simplicial vertex} in a graph $G$ is a vertex of degree order of $G$ minus one.

\begin{lemma}\label{lem varpi cases}
Let $G$ and $H$ be two non-trivial graphs.
Then $$\varpi(G\oplus H)\ge \max\{(\varpi(G)-1) \omega(H),(\varpi(H)-1) \omega(G)\} + 1.$$
Moreover, if there exists a $\varpi(G)$-set without vertices of degree order minus one, then $$\varpi(G\oplus H)\ge \varpi(G) \omega(H).$$
%\begin{enumerate}[{\rm (i)}]
%\item If there exists a $\varpi(G)$-set without vertices of degree $n-1$, then $\varpi(G\oplus H)\ge \varpi(G) \omega(H)$.
%\item If each $\varpi(G)$-set has vertices of degree $n-1$, then $\varpi(G\oplus H)\ge (\varpi(G)-1) \omega(H) + 1$.
%\end{enumerate}
\end{lemma}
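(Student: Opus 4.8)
The plan is to exhibit, in each case, an explicit twins-free clique of $G\oplus H$ of the required size, built from a $\varpi(G)$-set $W=\{u_1,\dots,u_k\}$ of $G$ (so $k=\varpi(G)$ and $N_G[u_i]\ne N_G[u_j]$ for $i\ne j$) and an $\omega(H)$-set $Y=\{v_1,\dots,v_m\}$ of $H$ (so $m=\omega(H)$). First I would record that any product $A\times Y$ with $A\subseteq W$ is automatically a clique of $G\oplus H$: two vertices with equal first coordinate are adjacent because their second coordinates lie in the clique $Y$, while two vertices with distinct first coordinates $u_i,u_j\in W$ are adjacent because $u_i\sim u_j$. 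Thus the whole difficulty is to control true twins, and for this I would use the closed-neighbourhood formula
$$N_{G\oplus H}[(u,v)]=(N_G(u)\times V(H))\cup(V(G)\times N_H(v))\cup\{(u,v)\},$$
already computed in the proof of Theorem~\ref{dims-cover-complement}.

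The heart of the argument is two ``distinguishing vertex'' claims. \textbf{Cross-column claim:} if $a,c\in W$ with $a\ne c$, then $(a,b)$ and $(c,d)$ are never true twins, for any $b,d$. Indeed $N_G[a]\ne N_G[c]$, so after possibly interchanging the two vertices there is some $w\in N_G[a]\setminus N_G[c]$; since $a\sim c$ forces $a\in N_G[c]$ and hence $w\ne a$, one checks $w\sim_G a$ and $w\not\sim_G c$, whence $(w,d)$ is adjacent to $(a,b)$ but not to $(c,d)$. \textbf{Same-column claim:} if $a$ is \emph{not} of degree $|V(G)|-1$ and $b\ne d$ with $b\sim_H d$, then $(a,b)$ and $(a,d)$ are not true twins; here I pick a non-neighbour $u^*\ne a$ of $a$ and note that $(u^*,d)$ is adjacent to $(a,b)$ (as $d\sim_H b$) but not to $(a,d)$. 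I expect this same-column claim, together with the observation that a universal vertex is the only obstruction to it, to be the crux of the whole lemma.

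With these two claims the constructions assemble quickly. For the \emph{moreover} part, choosing $W$ to be a $\varpi(G)$-set with no vertex of degree $|V(G)|-1$, the set $W\times Y$ is a clique in which every pair is either cross-column (safe by the first claim) or same-column with a non-universal first coordinate (safe by the second claim); hence it is a twins-free clique of size $\varpi(G)\,\omega(H)$. For the main bound I would use that a twins-free clique contains at most one vertex of degree $|V(G)|-1$, since any two such vertices have closed neighbourhood $V(G)$ and are therefore true twins; relabelling so that this possibly-universal vertex is $u_k$, I take
$$S=\big(\{u_1,\dots,u_{k-1}\}\times Y\big)\cup\{(u_k,v_1)\}.$$
This is a clique of cardinality $(\varpi(G)-1)\,\omega(H)+1$, and it is twins-free because every same-column pair now sits in a non-universal column $u_i$ with $i\le k-1$, while the single extra vertex $(u_k,v_1)$ only forms cross-column pairs. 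Finally, since $\oplus$ is commutative, the symmetric choice yields $\varpi(G\oplus H)\ge(\varpi(H)-1)\,\omega(G)+1$ as well, and taking the larger of the two constructions gives the stated maximum.
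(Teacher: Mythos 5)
Your proposal is correct and takes essentially the same route as the paper: both build the explicit twins-free cliques $W\times Y$ (for the \emph{moreover} part) and $\bigl((W\setminus\{u_k\})\times Y\bigr)\cup\{(u_k,v_1)\}$ (for the main bound), and certify twins-freeness by exhibiting a distinguishing neighbour for each pair. The only divergence is organisational: you case-split on whether the first coordinates agree, whereas the paper splits on whether the second coordinates agree, which lets your argument isolate slightly more cleanly the one place where non-universality of the first coordinate is genuinely needed.
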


\begin{proof}
Assume $G$ has order $n$ and let $W$ be a $\varpi(G)$-set without vertices of degree $n-1$ and let $Z$ be a $\omega(H)$-set. From the definition of Cartesian sum graphs, we have that the subgraph induced by $W\times Z$ is a clique in $G\oplus H$. Let $(u,v)$ and $(x,y)$ be two different vertices belonging to $W\times Z$. In order to show that $W\times Z$ is a twins-free clique, we consider the following cases.
\\
\\
\noindent Case 1: $v=y$. Since $u,x\in W$, then without loss of  generality, there exists vertex $w\in N_{G}(u)-N_{G}[x]$. Hence,  $(u,v)\sim (w,v)\not\sim (x,y)$.
\\
\\
\noindent Case 2: $v\ne y$. Since $u$ has degree less than or equal to $n-2$, there exists vertex $z\in V(G)$ such that $u\not\sim z$. Thus,  $(u,v)\not\sim (z,v)\sim (x,y)$.

Thus, $W\times Z$ is a twins-free clique and so $\varpi(G\oplus H)\ge |W\times Z|= \varpi(G) \omega(H)$.

On the other hand, let $Y$ be $\varpi(G)$-set having a vertex $a$ of degree $n-1$. Notice that $Y$ cannot contain other vertex of degree $n-1$. Now, let $b$ be a vertex belonging to $Z$. Observe that $S=((Y-\{a\})\times Z) \cup \{(a,b)\}$ is also a clique in $G\oplus H$ since $Y\times Z$ is a clique. We claim that $S$ is a twins-free clique. To see this,  we differentiate the following cases for two different vertices $(c,d),(e,f)\in S$.
\\
\\
\noindent
Case 1': $d=f$. Proceeding like in Case 1, we have that $(c,d)$ and $(e,f)$ are not true twins.
\\
\\
\noindent Case 2': $d\ne f$.  If  $c\ne a$, then  $c$ has degree less than or equal to $n-2$ and there exists a vertex $g\in V(G)$ such that $c\not\sim g$. Thus,  $(c,d)\not\sim (g,d)\sim(e,f)$. Now, suppose that $c=a$. In this case   $d=b$ and $e\ne a$. Since there exists $a'\in V(H)$ such that $a'\in N_H(a)-N_H[e]$, we have $(c,d)=(a,b)\sim (a',f)\not \sim (e,f)$.

Therefore, $S$ is a twins-free clique, which leads to
$$\varpi(G\oplus H)\ge |S|= (\varpi(G)-1) \omega(H) + 1.$$
By the symmetry of the Cartesian sum graphs we complete the proof.
\end{proof}

The following result is a direct consequence of the lemma above and the well known fact that the Cartesian sum of graphs is a commutative operation.

\begin{corollary}\label{cor varpi cases}
Let $G$ and $H$ be two non-trivial graphs of order $n$ and $n'$, respectively. Then the following assertions hold.
\begin{enumerate}[{\rm (i)}]
\item $\varpi(G\oplus H)\ge \max \{(\varpi(G)-1) \omega(H), \omega(G) (\varpi(H)-1)\}+1.$
\item If there exists a $\varpi(G)$-set without a vertex of degree $n-1$ and there exists a $\varpi(H)$-set without a vertex of degree $n'-1$, then
$$\varpi(G\oplus H)\ge \max \{\varpi(G) \omega(H),\omega(G) \varpi(H)\}.$$

\item If there exists a $\varpi(G)$-set without a vertex of degree $n-1$, then
$$\varpi(G\oplus H)\ge \max \{\varpi(G) \omega(H),\omega(G) (\varpi(H)-1) + 1\}.$$
\end{enumerate}
\end{corollary}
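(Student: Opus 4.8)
The plan is to treat Corollary~\ref{cor varpi cases} as a packaging of Lemma~\ref{lem varpi cases}, extracting all three inequalities from that lemma together with the commutativity $G\oplus H\cong H\oplus G$ noted in the text. Since the corollary is explicitly advertised as a direct consequence, I would not reprove any twins-free clique construction; instead I would carefully track how applying the lemma to the \emph{ordered} pair $(G,H)$ versus $(H,G)$ swaps the roles of $\varpi$ and $\omega$, and then take maxima.

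For part (i), I would observe that it is literally the first inequality of Lemma~\ref{lem varpi cases}: since $(\varpi(H)-1)\omega(G)=\omega(G)(\varpi(H)-1)$, the bound $\varpi(G\oplus H)\ge \max\{(\varpi(G)-1)\omega(H),(\varpi(H)-1)\omega(G)\}+1$ is exactly the claimed statement, so nothing further is required. For part (ii), I would invoke the ``moreover'' clause of the lemma twice. Applied to $(G,H)$, the hypothesis that some $\varpi(G)$-set has no vertex of degree $n-1$ yields $\varpi(G\oplus H)\ge \varpi(G)\omega(H)$. Applied to $(H,G)$, the hypothesis that some $\varpi(H)$-set has no vertex of degree $n'-1$ yields $\varpi(H\oplus G)\ge \varpi(H)\omega(G)$; by commutativity $\varpi(H\oplus G)=\varpi(G\oplus H)$, so this reads $\varpi(G\oplus H)\ge \omega(G)\varpi(H)$. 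Taking the maximum of the two bounds gives (ii).

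For part (iii), I would combine one bound from the ``moreover'' clause with one from part (i). The ``moreover'' clause applied to $(G,H)$ again gives $\varpi(G\oplus H)\ge \varpi(G)\omega(H)$, while the second branch of the maximum in part (i) supplies $\varpi(G\oplus H)\ge \omega(G)(\varpi(H)-1)+1$; taking the maximum yields the stated inequality. I expect no genuine obstacle here, as the substance lives entirely in Lemma~\ref{lem varpi cases}; the only point demanding care is the bookkeeping under commutativity, namely ensuring that when the lemma is read for $(H,G)$ the phrase ``degree order minus one'' is interpreted as $n'-1$ (the order of $H$), and that the factors $\varpi$ and $\omega$ attach to the correct graph after the swap. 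Verifying that these three cases exhaust exactly the hypotheses listed, and that no stronger hypothesis is silently assumed, is the single step I would double-check before declaring the proof complete.
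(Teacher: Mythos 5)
Your proposal is correct and matches the paper's own justification: the corollary is stated there as a direct consequence of Lemma~\ref{lem varpi cases} together with the commutativity of the Cartesian sum, which is exactly the bookkeeping you carry out. Nothing further is needed.
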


\begin{proposition}\label{pro other upper bounds cart sum}
Let $G$ and $H$ be two non-trivial graphs of order $n$ and $n'$, respectively such that $G\oplus H$ is connected.  Then the following assertions hold.
\begin{enumerate}[{\rm (i)}]
\item
$\dim_s(G\oplus H)\le nn' - \max \{(\varpi(G)-1) \omega(H), \omega(G) (\varpi(H)-1)\}-1.$
\item If there exists a $\varpi(G)$-set without a vertex of degree $n-1$ and there exists a $\varpi(H)$-set without a vertex of degree $n'-1$, then
$$\dim_s(G\oplus H)\le nn' - \max \{\varpi(G) \omega(H),\omega(G) \varpi(H)\}.$$

\item If there exists a $\varpi(G)$-set without a vertex of degree $n-1$, then
$$\dim_s(G\oplus H)\le nn' - \max \{\varpi(G) \omega(H),\omega(G) (\varpi(H)-1) + 1\}.$$
\end{enumerate}
\end{proposition}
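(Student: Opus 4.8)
The plan is to reduce all three statements to a single application of Theorem~\ref{th D2} followed by the matching lower bound on $\varpi(G\oplus H)$ supplied by Corollary~\ref{cor varpi cases}. The central observation is that, since $G$ and $H$ are non-trivial and $G\oplus H$ is connected by hypothesis, $G\oplus H$ is a connected graph of order $nn'\ge 4$, so Theorem~\ref{th D2} applies and yields the master inequality
$$\dim_s(G\oplus H)\le nn' - \varpi(G\oplus H).$$
Every part of the proposition then follows by inserting into this inequality the appropriate lower bound for $\varpi(G\oplus H)$ and using that $nn'-\varpi(G\oplus H)$ is monotone decreasing in $\varpi(G\oplus H)$, which turns each lower bound on the twins-free clique number into the corresponding upper bound on the strong metric dimension.

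Concretely, for part (i) I would invoke Corollary~\ref{cor varpi cases}(i), which gives $\varpi(G\oplus H)\ge \max\{(\varpi(G)-1)\omega(H),\omega(G)(\varpi(H)-1)\}+1$; substituting into the master inequality produces exactly the claimed expression. Parts (ii) and (iii) are handled identically: under the degree hypothesis of (ii) one substitutes the bound $\varpi(G\oplus H)\ge\max\{\varpi(G)\omega(H),\omega(G)\varpi(H)\}$ from Corollary~\ref{cor varpi cases}(ii), and under the weaker hypothesis of (iii) one substitutes $\varpi(G\oplus H)\ge\max\{\varpi(G)\omega(H),\omega(G)(\varpi(H)-1)+1\}$ from Corollary~\ref{cor varpi cases}(iii). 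In each case the degree conditions on the $\varpi$-sets that appear in the proposition are precisely those required by the corresponding part of the corollary, so the hypotheses transfer verbatim and no extra bookkeeping is needed.

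There is essentially no hard step here: the combinatorial content has already been front-loaded into Lemma~\ref{lem varpi cases} and its corollary, and the only thing the present argument contributes is the passage from a lower bound on $\varpi(G\oplus H)$ to an upper bound on $\dim_s(G\oplus H)$ through Theorem~\ref{th D2}. The single point that warrants a moment's care is confirming the applicability of Theorem~\ref{th D2}, namely that $G\oplus H$ is connected (given) and of order at least two (guaranteed by the non-triviality of both factors); once this is noted, the three inequalities drop out mechanically and the proof is complete.
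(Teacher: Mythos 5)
Your proof is correct and follows exactly the paper's argument: the paper likewise deduces all three bounds by combining Theorem~\ref{th D2} (giving $\dim_s(G\oplus H)\le nn'-\varpi(G\oplus H)$) with the corresponding lower bounds on $\varpi(G\oplus H)$ from Corollary~\ref{cor varpi cases}. Your write-up merely spells out the substitution more explicitly than the paper's one-line proof.
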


\begin{proof}
The bounds holds because of Theorem \ref{th D2} and Corollary \ref{cor varpi cases}.
\end{proof}

\begin{corollary}\label{cor G and complete}
Let $G$ be a non-trivial graph of order $n$ and maximum degree $\Delta$. If $G$ has no true twins and $\Delta\le n-2$, then
$$\dim_s(G\oplus K_{n'}) = nn' - n' \omega(G).$$
\end{corollary}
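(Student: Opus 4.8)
The plan is to reduce the computation of $\dim_s(G\oplus K_{n'})$ to that of the twins-free clique number $\varpi(G\oplus K_{n'})$ by first certifying that $G\oplus K_{n'}$ has diameter two, and then pinning down $\varpi(G\oplus K_{n'})$ between matching upper and lower bounds. First I would observe that the hypothesis $\Delta\le n-2$ forces $G$ to be non-complete, so that $G\oplus K_{n'}$ is non-complete; moreover $K_{n'}$ is connected and has no isolated vertices, so by Corollary \ref{rem diam2} the graph $G\oplus K_{n'}$ is connected (regardless of whether $G$ itself has isolated vertices). Since $D(K_{n'})=1\le 2$, Proposition \ref{lem Cart sum diam}(iv) yields $D(G\oplus K_{n'})=2$. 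This is the decisive structural fact, because it lets me invoke the equality (not merely the bound) in Theorem \ref{th D2}: as $G\oplus K_{n'}$ has order $nn'$ and diameter two,
$$\dim_s(G\oplus K_{n'}) = nn' - \varpi(G\oplus K_{n'}).$$

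It then remains to show that $\varpi(G\oplus K_{n'}) = n'\,\omega(G)$, which I would do by squeezing from both sides. For the upper bound, the general inequality $\varpi\le\omega$ together with Lemma \ref{lem omega} gives
$$\varpi(G\oplus K_{n'})\le \omega(G\oplus K_{n'}) = \omega(G)\,\omega(K_{n'}) = n'\,\omega(G).$$
For the matching lower bound I would exploit the two hypotheses on $G$. Since $\Delta\le n-2$, the graph $G$ has no vertex of degree $n-1$, so every $\varpi(G)$-set is free of vertices of degree $n-1$; hence the second part of Lemma \ref{lem varpi cases} applies with $H=K_{n'}$ and gives $\varpi(G\oplus K_{n'})\ge \varpi(G)\,\omega(K_{n'}) = n'\,\varpi(G)$. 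Finally, since $G$ has no true twins, every clique of $G$ is automatically a twins-free clique, whence $\varpi(G)=\omega(G)$; combining the two estimates yields $\varpi(G\oplus K_{n'}) = n'\,\omega(G)$. Substituting this into the diameter-two formula above produces $\dim_s(G\oplus K_{n'}) = nn' - n'\,\omega(G)$, as claimed.

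I expect the main obstacle to be bookkeeping rather than depth, concentrated in two subtle points. The first is verifying connectivity and diameter two even when $G$ is allowed to have isolated vertices, since in that case $D(G)=\infty$ and the lower bound of Proposition \ref{pro bounds cart sum} is not directly available; routing through Proposition \ref{lem Cart sum diam}(iv) and the exact diameter-two formula of Theorem \ref{th D2} sidesteps this. The second is correctly matching the refined lower bound of Lemma \ref{lem varpi cases} against the $\omega$-based upper bound: one must use the $\Delta\le n-2$ hypothesis precisely to guarantee a $\varpi(G)$-set with no degree-$(n-1)$ vertex (so that the stronger conclusion $\varpi(G\oplus K_{n'})\ge n'\varpi(G)$, rather than the weaker $n'(\varpi(G)-1)+1$, is in force), and the no-true-twins hypothesis precisely to identify $\varpi(G)$ with $\omega(G)$.
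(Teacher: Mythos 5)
Your proof is correct and rests on the same ingredients as the paper's: Theorem \ref{th D2}, Lemma \ref{lem omega} for the upper estimate on $\varpi(G\oplus K_{n'})$ (equivalently the lower bound on $\dim_s$), and the second part of Lemma \ref{lem varpi cases} together with $\varpi(G)=\omega(G)$ for the matching lower estimate; the paper merely cites Proposition \ref{pro bounds cart sum} and Proposition \ref{pro other upper bounds cart sum}(iii), which package exactly these steps. The one place you go beyond the paper is the explicit verification that $D(G\oplus K_{n'})=2$ via Proposition \ref{lem Cart sum diam}(iv), and this is worth keeping: the hypotheses allow $G$ to have isolated vertices (e.g.\ $\Delta=0\le n-2$), in which case $D(G)=\infty$ and the condition ``$D(G)\le 2$ or neither factor has isolated vertices'' in the ``moreover'' part of Proposition \ref{pro bounds cart sum} fails as literally stated, so one must, as you do, use $D(K_{n'})\le 2$ and the commutativity of $\oplus$ to justify the diameter-two equality underlying the lower bound.
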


\begin{proof}
First of all, note that $G\oplus K_{n'}$ is connected, as stated in Corollary \ref{rem diam2}. On the other hand, since $G$ has no true twins, it follows $\omega(G)=\varpi(G)$. Now, from Proposition \ref{pro bounds cart sum} we have that $\dim_s(G\oplus K_{n'})\ge nn' - n'\omega(G)$. Moreover, by using Proposition \ref{pro other upper bounds cart sum} (iii) we obtain $\dim_s(G\oplus H)\le nn' - \max \{\varpi(G) \omega(H),\omega(G) (\varpi(H)-1) + 1\} = nn' - n'\omega(G)$. Therefore, the equality holds.
\end{proof}

\begin{corollary}\label{cor star and complete} For any integers, $n,n'\ge 2$,
$$(n+1)n'-2n' \le \dim_s(K_{1,n}\oplus K_{n'}) \le (n+1)n' - n' - 1.$$
\end{corollary}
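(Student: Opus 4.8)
The plan is to obtain both inequalities as direct specializations of the general bounds already established, after first recording the relevant parameters of the two factors. First I would note that the star $K_{1,n}$ (with $n\ge 2$) has order $n+1$ and clique number $\omega(K_{1,n})=2$, since its maximum cliques are single edges joining the center to a leaf. Moreover $K_{1,n}$ has no true twins: two distinct leaves share the same open neighborhood but not the same closed one, and the center has a closed neighborhood distinct from that of any leaf. Hence its twins-free clique number coincides with its clique number, $\varpi(K_{1,n})=2$. For the complete factor, $\omega(K_{n'})=n'$, whereas every two vertices of $K_{n'}$ are true twins, so a twins-free clique can have at most one vertex, giving $\varpi(K_{n'})=1$.

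Next I would verify the hypotheses of the two results I intend to use. Since neither $K_{1,n}$ nor $K_{n'}$ has an isolated vertex, Corollary \ref{rem diam2} guarantees that $K_{1,n}\oplus K_{n'}$ is connected, so Propositions \ref{pro bounds cart sum} and \ref{pro other upper bounds cart sum} both apply. In addition, for $n\ge 2$ the star satisfies $D(K_{1,n})=2$, which is precisely the extra hypothesis ($D(G)\le 2$) needed for the lower bound in Proposition \ref{pro bounds cart sum}.

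The lower bound then follows from the lower bound in Proposition \ref{pro bounds cart sum}, reading
$$\dim_s(K_{1,n}\oplus K_{n'})\ge (n+1)n'-\omega(K_{1,n})\,\omega(K_{n'})=(n+1)n'-2n'.$$
For the upper bound I would invoke Proposition \ref{pro other upper bounds cart sum}(i) with $G=K_{1,n}$ (of order $n+1$) and $H=K_{n'}$, which gives
$$\dim_s(K_{1,n}\oplus K_{n'})\le (n+1)n'-\max\bigl\{(\varpi(K_{1,n})-1)\omega(K_{n'}),\,\omega(K_{1,n})(\varpi(K_{n'})-1)\bigr\}-1=(n+1)n'-n'-1,$$
where the maximum equals $\max\{n',0\}=n'$ because $n'\ge 2$. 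Combining the two displayed inequalities produces the stated bounds.

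I do not anticipate any genuine obstacle, as the argument is a clean application of earlier results; the only points demanding care are bookkeeping with the order parameters — in the general statements the symbol for the order of the first factor must be set to $n+1$, the order of $K_{1,n}$, rather than to $n$ — and confirming that $\varpi(K_{n'})=1$, which is what makes the term $\omega(K_{1,n})(\varpi(K_{n'})-1)$ vanish so that the governing quantity in the upper bound is $(\varpi(K_{1,n})-1)\omega(K_{n'})=n'$.
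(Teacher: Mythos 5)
Your proposal is correct and follows exactly the paper's own route: the lower bound from Proposition \ref{pro bounds cart sum} (applicable since $D(K_{1,n})=2$ and neither factor has isolated vertices) and the upper bound from Proposition \ref{pro other upper bounds cart sum}(i), with the parameter values $\omega(K_{1,n})=\varpi(K_{1,n})=2$, $\omega(K_{n'})=n'$, $\varpi(K_{n'})=1$ computed correctly. The paper states this in one line; you have merely supplied the explicit bookkeeping.
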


\begin{proof}
The lower bound is a direct consequence of Proposition \ref{pro bounds cart sum} while the upper bound is a direct consequence of Proposition \ref{pro other upper bounds cart sum} (i).
\end{proof}

\section*{Conclusion and open problems}

We have studied the strong metric dimension of Cartesian sum graphs $G\oplus H$ for all the possibilities of connectivity of $G\oplus H$ with the following exception: when $G$ has an isolated vertex, $H$ has no isolated vertices and $D(H)>2$. That is, it remains to study the case where  $G\oplus H$ has diameter three. We leave this case as an open problem. Also, according to the open problem presented in \cite{Kuziak2014} about characterizing some  kind of strong resolving graphs, in this article we describe the strong resolving graphs of Cartesian sum graphs of diameter two.

\end{document}